\numberwithin{equation}{section}
\theoremstyle{plain}
\newtheorem{theorem}{Theorem}[section]
\newtheorem{proposition}[theorem]{Proposition}
\newtheorem{conjecture}[theorem]{Conjecture}
\newtheorem{theor}{Theorem}
\newtheorem{conj}[theor]{Conjecture}
\newtheorem*{theorem*}{Theorem}
\newtheorem*{proposition*}{Proposition}
\newtheorem*{corollary*}{Corollary}
\newtheorem*{lemma*}{Lemma}
\newtheorem*{conjecture*}{Conjecture}
\theoremstyle{definition}
\newtheorem{definition}[theorem]{Definition}
\newtheorem{defn}[theor]{Definition}
\newtheorem*{definition*}{Definition}
\newtheorem*{example*}{Example}
\newtheorem*{question*}{Question}
\newtheorem*{philosophy*}{Philosophy}
\theoremstyle{remark}
\newtheorem{remark}[theorem]{Remark}
\newtheorem*{remark*}{Remark}
\def\ape{Ap\'{e}ry}
\def\zm{\zeta^{\cM}}
\def\G{\mathbb{G}}
\def\MM{\cZ}
\def\zm{\zeta^{\cM}}
\def\p{\mathbf{p}}
\def\Ai{\Q_{p\to\infty}}
\def\M{\cH}
\def\MM{\cA}
\def\MMh{\hat{\MM}}
\def\per{\widehat{per}}
\def\zm{\zeta^{\fm}}
\def\zmm{\zeta^{\fa}}
\def\PP{\cP}
\def\PPm{\PP^{\fm}}
\def\mhs{\cM}
\def\fil{\mathrm{Fil}}
\def\p{\mathbf{p}}
\def\Hm{\H^{\fa}}
\def\am{a^{\fa}}
\def\mzv{multiple zeta value}
\def\ppc{supercongruence}
\def\Ppc{Supercongruence}
\def\MMo{\MM^{(1)}}
\def\pp#1{\pi_{1,#1}(X;t_{01},t_{10})}
\def\spe{\mathrm{Spec\,}}
\def\nc{\!\!\ll\!\! e_0,e_1\!\!\gg}
\begin{document}
\title[The completed finite period map and Galois theory]{The completed finite period map and Galois theory of supercongruences}
\author{Julian Rosen}
\email{julianrosen@gmail.com}
\date{\today}
\maketitle
\begin{abstract}
A period is a complex number arising as the integral of a rational function with algebraic number coefficients over a rationally-defined region. Although periods are typically transcendental numbers, there is a conjectural Galois theory of periods coming from the theory of motives. This paper formalizes an analogy between a class of periods called \mzv{}s, and congruences for rational numbers modulo prime powers (called \ppc{}s). We construct an analogue of the motivic period map in the setting of \ppc{}s, and use it to define a Galois theory of \ppc{}s. We describe an algorithm using our period map to find and prove \ppc{}s, and we provide software implementing the algorithm.
\end{abstract}


\section{Introduction}

\subsection{Periods}
A \emph{period} is a complex number given by the integral of a rational function with algebraic number coefficients, over a region in $\R^n$ defined by finitely many inequalities between polynomials with rational coefficients. Many familiar constants are periods, e.g.
\begin{gather*}
\pi=\iint\limits_{x^2+y^2\leq 1}\hspace{-1mm}1\,dx\,dy,\gap
\log(r)=\int_1^r\frac{dx}{x}\text{ for }r\in\Q_{>0},\\
\zeta(n)=\iiint\limits_{[0,1]^n}\frac{dx_1\ldots dx_n}{1-x_1\cdots x_n}\text{ for }n\geq 2.
\end{gather*}
The set $\PP\subset\C$ of all periods is a countable subring of $\C$ containing the algebraic numbers.
Although periods are typically transcendental numbers, the theory of motives predicts that a version of Galois theory should hold for periods (see \cite{And09}). The motivic Galois action has been studied in depth for a class of periods called \mzv{}s \cite{Bro12}, which we now define.

Recall that a \emph{composition} is a finite ordered list $\bs=(s_1,\ldots,s_k)$ of positive integers. The \emph{weight} and \emph{depth} of $\bs$ are $|\bs|=s_1+\ldots+s_k$ and $\ell(\bs)=k$, respectively. For $\bs$ a composition satisfying $s_1\geq 2$, we define the \emph{\mzv{}} by the convergent infinite series
\begin{equation}
\label{defmzv}
\zeta(\bs):=\sum_{n_1>\ldots>n_k\geq 1}\frac{1}{n_1^{s_1}\ldots n_k^{s_k}}\in\R.
\end{equation}
For every composition $\bs$, we have
\[
\zeta(\bs)=\iiint\limits_{0\leq x_1\leq\ldots\leq x_{|\bs|}\leq 1}\hspace{-4mm}\omega_1\ldots\omega_{|\bs|},
\]
where $\omega_i=dx_i/(1-x_i)$ if $i\in\{s_1,s_1+s_2,\ldots,s_1+\ldots+s_k\}$ and $\omega_i=dx_i/x_i$ otherwise. This iterated integral expression shows that \mzv{}s are periods.

\subsection{\Ppc{}s}
This paper develops a connection between periods and prime power divisibility properties of rational numbers. A \emph{\ppc} is a congruence between rational or $p$-adic numbers modulo a power of a prime $p$. We consider families of supercongruences holding for all primes at once, up to finitely many exceptions. For example, in 1979 \ape{} \cite{Ape79} proved that $\zeta(3)$ is irrational. The proof involved a sequence of rational approximations to $\zeta(3)$, whose denominators are given by the integers
\[
a_n:=\sum_{k=0}^n{n\choose k}^2{n+k\choose k}^2,
\]
now called the \emph{\ape{} numbers}. It is known \cite{Cho80} that the \ape{} numbers satisfy the \ppc{} $a_{p}\equiv 5\mod p^3$ for every prime $p\geq 5$. In our setting, we view the prime-indexed sequence $(a_p)$ as a finite analogue of a period.

Finite truncations of the \mzv{} series \eqref{defmzv} are called \emph{multiple harmonic sums}, and we write
\[
H_N(s_1,\ldots,s_k):=\sum_{N\geq n_1>\ldots>n_k\geq 1}\frac{1}{n_1^{s_1}\ldots n_k^{s_k}}\in\Q.
\]
Many \ppc{}s are known for multiple harmonic sums, especially in the case $N=p-1$, with $p$ a prime. Residue classes of multiple harmonic sums $\H(\bs)$ modulo $p$ (or sometimes modulo powers of $p$) are called finite multiple zeta values, and they have received considerable attention in recent years (e.g.\ see the recent works \cite{Koh14}\cite{Mur15}\cite{Mur16}\cite{Ono16}\cite{Oya15}\cite{Sai15}\cite{Sai16}\cite{Zha15}).

A useful technique for proving \ppc{}s is to relate terms to multiple harmonic sums. For example, consider the central binomial  coefficient ${2p\choose p}$, which has interesting arithmetic properties. It is not difficult to show that
\begin{equation}
\label{2pcp}
{2p\choose p}=2\sum_{n=0}^{\infty} p^n\H(\underbrace{1,\ldots,1}_n).
\end{equation}
Many series expansions related to \eqref{2pcp} are given by the author in \cite{Ros16a}.

\subsection{$p$-adic \mzv{}s}
The bridge between periods and \ppc{}s comes from a $p$-adic analogue $\zeta_p(\bs)\in\Q_p$ of the \mzv{}s. These $p$-adic numbers record the action of the crystalline frobenius on the motivic unipotent fundamental group of $\P^1\backslash\{0,1,\infty\}$ (\cite{Del05}, \S 5.28). There is a construction in terms of the Coleman integral due to Furusho \cite{Fur04,Fur07}. The $p$-adic \mzv{}s are expected to satisfy the same algebraic relations as the real \mzv{}s, along with the additional relation $\zeta_p(2)=0$.

A remarkable formula (stated as Theorem \ref{pj} below), conjectured by Hirose and Yasuda \cite{Yas14a} and independently discovered and proved by Jarossay \cite{Jar16} expresses the multiple harmonic sum $\H(\bs)$ as a $p$-adically convergent infinite linear combination of $p$-adic \mzv{}s. As a consequence, any $p$-adically convergent series involving multiple harmonic sums $\H(\bs)$ can we rewritten as series involving $p$-adic \mzv{}s. For example, \eqref{2pcp} can be used to derive a series representation
\begin{equation}
\label{cbi}
{2p\choose p}\equiv 2-4p^{3}\zeta_p(3)-12p^{5}\zeta_p(5)+4p^{6}\zeta_p(3)^{2}-36p^{7}\zeta_p(7)+\ldots
\end{equation}
in terms of $p$-adic \mzv{}s.
\subsection{Motivic periods}
\label{ssmgt}
The Galois theory of periods is conditional on algebraic independence statements for periods. To obtain an unconditional theory, one replaces $\PP$ by an abstractly-defined ring $\PPm$, called a ring of formal (or motivic) periods. One also defines a linear pro-algebraic group $G$, and an action of $G$ on $\PPm$. The motivic \emph{period map} is then a ring homomorphism $per:\PPm\to\C$ taking a motivic period to an actual (complex) period. The Grothendieck period conjecture for $\PPm$ is the statement that $per$ is injective, and if this is the case one gets an action of $G$ on $\PP$ (see \cite{Hub17}, Chapter 12).

The \mzv{}s are the periods of mixed Tate motives over $\Z$ \cite{Bro12}. Here the corresponding motivic period ring is $\M$, the \emph{ring of motivic \mzv{}s}. This is a commutative $\Q$-algebra spanned by elements $\zm(\bs)$. In addition to a period map $per\colon\M\to\R$ taking $\zm(\bs)$ to $\zeta(\bs)$,  for each prime $p$ there is also a $p$-adic period map $\tilde{per}_p:\M\to\Q_p$, taking $\zm(\bs)$ to $\zeta_p(\bs)$ \cite{Yam10}. While $per$ is conjectured to be injective, $\tilde{per}_p$ is not because it kills $\zm(2)$. If we write $\MM:=\M/\zm(2)\M$, the induced map
\[
per_p:\MM\to\Q_p
\]
is conjectured to be injective \cite{Yam10}. We write $\zmm(\bs)$ for the image of $\zm(\bs)$ in $\MM$.


\subsection{Results}
\subsubsection{The completed finite period map}
We construct an analogue $\per$ of the period maps $per$ and $per_p$ in the setting of finite periods. We need to consider infinite sums of powers of $p$ multiplied by $p$-adic multiple zeta values (for example \eqref{cbi}), so the domain of $\per$ is $\MM((T))$, the ring of formal Laurent series over $\MM$ in a variable $T$ (here $T$ acts as a formal version of an unknown prime $p$). The codomain of $\per$ is the quotient ring
\[
\Ai:=\frac{\left\{(a_p)\in\prod_p\Q_p:v_p(a_p)\text{ bounded below}\right\}}{\left\{(a_p)\in\prod_p\Q_p:v_p(a_p)\to\infty \text{ as }p\to\infty\right\}}.
\]
The rings $\MM((T))$ and $\Ai$ are complete with respect to a topologies arising from decreasing filtrations, given by
\begin{align*}
\fil^n \MM((T)) &= T^n\MM[[T]],\\
\fil^n\Ai &=\{(a_p)\in\Ai:\liminf v_p(a_p) \geq n\}.
\end{align*}
\begin{defn}
The \emph{completed finite period map} is the unique continuous ring homomorphism
\[
\per:\MM((T))\to\Ai
\]
satisfying $\per(\zmm(\bs))=(\zeta_p(\bs))$ and $\per(T)=(p)$.
\end{defn}

In \cite{Ros16a} the author introduces a subalgebra of $\Ai$ called the MHS algebra, consisting of elements that admit $p$-adic series expansion of a certain shape involving multiple harmonic sums, generalizing \eqref{2pcp}. A precise definition of the MHS algebra is given below as Definition \ref{defmhsalg}. The MHS algebra contains many ``elementary'' quantities (various sums of binomial coefficients, generalizations of the harmonic numbers, etc.), some of which are listed in Theorem \ref{thmhs} below.

The following result is Theorem \ref{propim} below.

\begin{theor}
The image of $\per$ is precisely the MHS algebra.
\end{theor}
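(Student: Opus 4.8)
The plan is to prove the two inclusions separately. Write $\mathcal{R}\subseteq\Ai$ for the MHS algebra of Definition \ref{defmhsalg}, and recall that it is a closed $\Q$-subalgebra of $\Ai$ containing the sequences $(p)$ and $(1/p)$, and that every element of $\mathcal{R}$ admits a $p$-adically convergent presentation as a $\Q$-linear combination $\sum_{j}(p)^{j}\sum_{\bs}c_{j,\bs}\,(H_{p-1}(\bs))$ in which, for each $j$, only finitely many compositions $\bs$ occur and $j$ ranges over a set bounded below.

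For the inclusion $\mathcal{R}\subseteq\per(\MM((T)))$ I would argue directly from Theorem \ref{pj}. In its precise form that result writes $(H_{p-1}(\bs))=\per(g_{\bs})$ for an explicit element $g_{\bs}\in\MM[[T]]$ whose coefficient of $T^{n}$ is a \emph{finite} $\Q$-linear combination of elements $\zmm(\bs')$, the bound on the weights $|\bs'|$ that occur depending only on $|\bs|$ and $n$. Substituting these into the presentation of a general $\alpha\in\mathcal{R}$ produces a double series $g=\sum_{j}T^{j}\sum_{\bs}c_{j,\bs}g_{\bs}$; at each power $T^{m}$ only finitely many pairs $(j,\bs)$ contribute (finitely many $j\le m$, and finitely many $\bs$ for each such $j$), so the coefficient of $T^{m}$ lies in $\MM$ and $g$ is a genuine element of $\MM((T))$. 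Since $\per$ is a continuous ring homomorphism with $\per(T)=(p)$ we get $\per(g)=\alpha$, hence $\alpha\in\per(\MM((T)))$.

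For the reverse inclusion $\per(\MM((T)))\subseteq\mathcal{R}$, note that $\MM((T))$ is topologically generated as a $\Q$-algebra by $T$, $T^{-1}$, and the elements $\zmm(\bs)$ (the partial sums of a Laurent series are $\Q$-polynomials in these), and that $\per$ is a continuous ring homomorphism; since $\mathcal{R}$ is a closed $\Q$-subalgebra, it suffices to check that $\per(T)=(p)$, $\per(T^{-1})=(1/p)$, and $\per(\zmm(\bs))=(\zeta_p(\bs))$ all lie in $\mathcal{R}$. The first two are immediate, so the content is that every $p$-adic multiple zeta value sequence $(\zeta_p(\bs))$ lies in the MHS algebra. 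The plan here is to ``invert'' Theorem \ref{pj}: that formula is homogeneous for the weight grading in which $p$ has weight $1$ and $\zmm(\bs')$ has weight $|\bs'|$, so in each fixed weight it exhibits the $\Q$-span of the multiple harmonic sum monomials $(p)^{i}(H_{p-1}(\bs'))$ as a subspace of the $\Q$-span of the $p$-adic multiple zeta value monomials $(p)^{i}(\zeta_p(\bs'))$. The crux is to show this is in fact all of the latter span --- equivalently, that each $(\zeta_p(\bs))$ itself lies in the span of the multiple harmonic sum monomials of the same weight --- after taking into account the double-shuffle relations and $\zeta_p(2)=0$, which are exactly the relations the multiple harmonic sums already ``know.'' Granting this, each $(\zeta_p(\bs))$ becomes a weight-homogeneous, $p$-adically convergent $\Q$-combination of terms $(p)^{j}(H_{p-1}(\bs'))$ with only finitely many $\bs'$ at each power of $p$, i.e.\ an element of $\mathcal{R}$.

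The step I expect to be the main obstacle is precisely this last inversion. One must prove that Theorem \ref{pj} really can be solved for the $p$-adic multiple zeta values --- equivalently, that no nonzero $\Q$-linear combination of the $(\zeta_p(\bs))$ is invisible to all multiple harmonic sums --- and one must do so with the uniform weight control that keeps the answer inside $\MM((T))$ rather than in some larger completion (so that only finitely many $\zmm(\bs')$ appear at each power of $T$). An alternative to inverting Theorem \ref{pj} by hand would be to invoke or establish a closed formula expressing $\zeta_p(\bs)$ directly as a $p$-adic limit of multiple harmonic sums; either way, this analysis --- together with the routine task of reconciling the abstract ``MHS-series'' description used above with the exact form of Definition \ref{defmhsalg} (which truncations $H_{N}$ and which powers of $p$ are permitted) --- is where the real work lies.
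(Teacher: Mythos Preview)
Your two-inclusion outline is exactly the paper's, and your treatment of $\mathcal{R}\subseteq\per(\MM((T)))$ is the same argument: lift each $\H(\bs)$ to the motivic multiple harmonic sum $\Hm(\bs)\in\MM[[T]]$ built from Theorem~\ref{pj}, and assemble.

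The gap is precisely where you locate it. For the reverse inclusion you need that each $\zmm(\bs)$ can be written, inside $\MM((T))$, as a convergent $\Q$-linear combination of the $\Hm(\bt)$ with weight control. You propose to extract this by inverting Theorem~\ref{pj} weight-by-weight, but you do not carry this out, and the worry you raise---that some nonzero combination of the $\zmm(\bs)$ might be ``invisible'' to all multiple harmonic sums---is exactly the point that needs outside input. The paper does not invert the formula directly; instead it imports two results. First, Yasuda \cite{Yas14b} proves that the \emph{constant terms} of \eqref{eqjar} (the $\ell_1=\cdots=\ell_i=0$ contributions, summed over all compositions) already span the space of \mzv{}s modulo $\zeta(2)$. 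Second, Jarossay (\cite{Jar16a}, Proposition~3) shows that this spanning statement implies the desired inversion at the motivic level: for each $\bs$ there exist rationals $d_j$ and compositions $\bt_j$ with $|\bt_j|\geq|\bs|$ and $|\bt_j|\to\infty$ such that
\[
\zmm(\bs)=\sum_{j\geq 1} d_j\, T^{\,|\bt_j|-|\bs|}\,\Hm(\bt_j)\in\MM[[T]].
\]
The weight inequality $|\bt_j|\geq|\bs|$ is exactly the uniform control you asked for (only nonnegative powers of $T$ appear, and only finitely many $\bt_j$ at each power), so substituting this into an arbitrary $\sum_i c_i T^{b_i}\zmm(\bs_i)$ and applying $\per$ lands in the MHS algebra. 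With these two citations in hand your argument is complete and matches the paper's.
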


We also formulate an analogue of the period conjecture, which says that $\per$ is compatible with the filtrations on its domain and codomain in the following strong sense.
\begin{conj}[Period conjecture]
\label{conjperintro}
The period map $\per$ satisfies
\[
\per^{-1}\lp\fil^n\Ai\rp=\fil^n\MM((T))
\]
for all $n$. In particular, $\per$ is injective.
\end{conj}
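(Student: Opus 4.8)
The plan is to prove Conjecture~\ref{conjperintro} by establishing the two inclusions $\fil^n\MM((T))\subseteq\per^{-1}(\fil^n\Ai)$ and $\per^{-1}(\fil^n\Ai)\subseteq\fil^n\MM((T))$ separately; the first should be unconditional, and the second is the deep part. For the first --- that $\per$ carries $\fil^n$ into $\fil^n$ --- it suffices, by the multiplicativity $\per(T^ny)=(p^n)_p\,\per(y)$ and the fact that $\per(T)=(p)_p\in\fil^1\Ai$, to show $\per(\MM)\subseteq\fil^0\Ai$, i.e.\ that $v_p(per_p(c))\geq 0$ for every fixed $c\in\MM$ and all but finitely many primes $p$. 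Writing $c$ as a finite $\Q$-linear combination of the $\zmm(\bs)$, this follows from the $p$-integrality of the $p$-adic \mzv{}s once $p$ is large compared with the weights occurring, together with the fact that only finitely many primes divide the denominators of the coefficients; passing from Laurent polynomials to all of $\MM((T))$ is subsumed in the continuity of $\per$.

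For the reverse inclusion, I would take $x=\sum_{m\geq M}c_mT^m\in\MM((T))$ with $c_M\neq 0$ and $M<n$ and show that $v_p(\per(x)_p)<n$ for infinitely many $p$. For $p$ large one has $\per(x)_p=p^M per_p(c_M)+\sum_{m>M}p^m per_p(c_m)$, and the integrality estimates underlying the construction of $\per$ force the tail to contribute valuation $\geq M+1$, so that $v_p(\per(x)_p)=M+v_p(per_p(c_M))$ for those $p$. Hence the conjecture essentially reduces to the statement that for every nonzero $c\in\MM$ one has $v_p(per_p(c))=0$ for infinitely many primes $p$; equivalently, that the map $\MM\to\Ai/\fil^1\Ai$ induced by $\per$ is injective. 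Via the Hirose--Yasuda--Jarossay formula (Theorem~\ref{pj}) this is closely tied to the conjecture that the finite \mzv{}s $\H(\bs)$ satisfy no $\Q$-linear relations beyond the known ones, and it is the exact finite analogue of the conjectural injectivity of $per_p\colon\MM\to\Q_p$ \cite{Yam10}.

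That injectivity is the main obstacle, and it is genuinely out of reach: even for a single prime it is the open conjecture of \cite{Yam10}, whereas here one needs the stronger, uniform-in-$p$ statement that a fixed nonzero $\Q$-combination of $p$-adic \mzv{}s is not divisible by arbitrarily large powers of $p$ for cofinitely many primes $p$. The only plausible unconditional route I can see is to deduce it from the motivic period conjecture together with the known structure of $\M$ under the motivic Galois group \cite{Bro12}: using the motivic coaction one would try to reduce a hypothetical relation $per_p(c)\equiv 0$ modulo a large power of $p$, holding for all large $p$, to relations among periods of lower weight, and then induct on the weight. Lacking that, the realistic targets are a proof of Conjecture~\ref{conjperintro} conditional on the motivic period conjecture, and testing it in bounded weight and for small $n$ using the explicit description of $\per$ furnished by the algorithm of this paper.
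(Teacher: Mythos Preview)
The statement is a \emph{conjecture}; the paper does not prove it and explicitly leaves it open. Your proposal is therefore not to be compared against a proof in the paper, because there is none. What the paper does do is exactly what you do in your second paragraph: it records the easy inclusion $\per\big(\fil^n\MM((T))\big)\subseteq\fil^n\Ai$ (just after Definition~\ref{defperiod}), and then reformulates the hard inclusion as the statement that for every nonzero $c=\sum_i c_i\zmm(\bs_i)\in\MM$, the $p$-adic number $\sum_i c_i\zeta_p(\bs_i)$ lies in $\Z_p^\times$ for infinitely many primes $p$. Your reduction lands on precisely this reformulation, and your argument for it (isolate the lowest-order coefficient $c_M$, use integrality to push the tail into $\fil^{M+1}\Ai$) is correct.

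So your proposal is accurate as an analysis of the conjecture and matches the paper's own discussion, but it is not a proof, and you correctly identify why: the required uniform-in-$p$ nonvanishing of $per_p(c)$ is open (indeed, stronger than the single-prime injectivity conjecture of \cite{Yam10}). The speculative inductive strategy via the motivic coaction that you sketch is not pursued in the paper and remains, as you say, out of reach.
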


The truth of Conejcture \ref{conjperintro} would give a completely algorithmic way to prove \ppc{}s between elements of the MHS algebra. We describe the algorithm in Section \ref{secalg}. We also provide software implementing the algorithm, which we describe in Appendix A.

Another consequence of Conjecture \ref{conjperintro} would be a Galois theory of \ppc{}s. In Section \ref{secgalois} we define the Galois theory of \ppc{}s, and in Section \ref{secd1} we give some explicit computations. We apply the Galois theory to give an (unconditional) proof of a supercongruence for factorials.

%
%
\section{The completed finite period map}
\label{secalg}
In this section we construct the completed finite period map.

\subsection{$p$-adic \mzv{}s}

The unipotent fundamental groupoid of $X=\P^1\backslash\{0,1,\infty\}$ has the structure of a motivic groupoid \cite{Del02}. This means that for each $x$, $y\in X(\Q)$, there are various pro-algebraic varieties $\pi_{1,\bullet}(X;x,y)$, called \emph{realizations}, corresponding to different cohomology theories. We will make use of the de Rham, Betti, and crystalline realizations:
\begin{equation}
\label{paths}
\pi_{1,dR}(X;x,y),\gap \pi_{1,B}(X;x,y),\gap \pi_{1,p,crys}(X;x,y).
\end{equation}
Through the use of tangential basepoints, \eqref{paths} make sense when $x$ and $y$ are tangent vectors at $0$ or $1$. We write $t_{01}$ (resp.\ $t_{10}$) for the unit tangent vector at $0$ in the positive direction (resp.\ the unit tangent vector at $1$ in the negative direction).

For any commutative $\Q$-algebra $R$, we may identify the $R$-points of $\pp{dR}$ with the set of group-like elements of $R\nc$, the Hopf algebra of formal power series in non-commuting variables $e_0$ and $e_1$. The straight line path from $0$ to $1$ determines an element $dch\in\pp{B}(\Q)$, and under the Betti-de Rham comparison isomorphism
\[
C_{B,dR}:\pp{B}\times\spe\R\xrightarrow{\sim}\pp{dR}\times\spe\R,
\]
$dch$ maps to an element of $\pp{dR}(\R)$. In power series associated with $C_{B,dR}(dch)$, the coefficient of
\[
e_0^{s_1-1}e_1\ldots e_0^{s_k-1}e_1
\]
is $(-1)^k\zeta(s_1,\ldots,s_k)$.

Every vector bundle on $X$ with unipotent connection has a canonical trivialization, which determines an element $\gamma\in\pp{dR}(\Q)$. Via the de Rham-crystalline comparison isomorphism, the crystalline frobenius gives an automorphism $\phi_p$ of 
\[
\pp{p,crys}\simeq\pp{dR}\times\spe\Q_p.
\]
The image of $\gamma$ under $\phi_p$ is an element of $\pp{dR}(\Q_p)$. For each composition $\bs=(s_1,\ldots,s_k)$, the $p$-adic \mzv{} $\zeta_p(\bs)$ is defined so that the coefficient of
\[
e_0^{s_1-1}e_1\ldots e_0^{s_k-1}e_1
\]
in $\phi_p(\gamma)$ is $p^{|\bs|}\zeta(\bs)$. It is a result of Yasuda that for every composition $\bs$,
\begin{equation}
\label{int}
\zeta_p(\bs)\in\sum_{\ell\geq 0}\frac{p^\ell}{(|\bs|+\ell)!}\Z_p.
\end{equation}

The following formula expresses the multiple harmonic sum $\H(\bs)$ as an infinite series involving $p$-adic \mzv{}s.

\begin{theorem}[\cite{Jar16}, Theorem 1.2]
\label{pj}
For every composition $\bs=(s_1,\ldots,s_k)$, there is a $p$-adically convergent infinite series identity
\begin{gather}
\label{eqjar}
\hspace{-40mm}\H(\bs)=\sum_{i=0}^k \sum_{\ell_1,\ldots,\ell_i\geq 0}(-1)^{s_1+\ldots+s_i} \prod_{j=1}^i \frac{(s_j)_{\ell_j}}{\ell_j!}\cdot\hfill\\
\hspace{45mm}\hfill p^{\ell_1+\ldots+\ell_i}\zeta_p(s_i+\ell_i,\ldots,s_1+\ell_1)\zeta_p(s_{i+1},\ldots,s_k).
\end{gather}
Here $(s)_{\ell}:=s(s+1)\cdots(s+\ell-1)$ is the Pochhammer symbol.
\end{theorem}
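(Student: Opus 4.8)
The natural setting for a proof is the theory of the crystalline fundamental group recalled above: the $p$-adic multiple zeta values are, up to the scaling by $p^{|\bs|}$, the coefficients of the associator $\Phi_p:=\phi_p(\gamma)\in\Q_p\nc$, so an identity relating them to multiple harmonic sums ought to be the shadow of a structural property of $\phi_p$ as a functor on $\pp{dR}$. The plan is to realize both sides of \eqref{eqjar} as coefficients of explicit elements of $\Q_p\nc$ and to deduce \eqref{eqjar} from the resulting identity of noncommutative generating series.

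For the left-hand side, I would first record that the product $\mathcal{G}:=\prod_{n=1}^{p-1}\bigl(1+(n-e_0)^{-1}e_1\bigr)$, taken in decreasing order of $n$, has the multiple harmonic sum $\H(\bs)=H_{p-1}(\bs)$ as the coefficient of $e_0^{s_1-1}e_1\cdots e_0^{s_k-1}e_1$: expanding $(n-e_0)^{-1}e_1=\sum_{s\ge 1}n^{-s}e_0^{s-1}e_1$ and choosing one such block from each of $k$ factors — whose indices must then decrease from left to right — reproduces $\sum_{p-1\ge n_1>\cdots>n_k\ge 1}n_1^{-s_1}\cdots n_k^{-s_k}$. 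For the right-hand side, reading \eqref{eqjar} off word by word exhibits it as the coefficient of the same word in an explicit transform of $\Phi_p$ built from three ingredients: the deconcatenation $\bs=(s_1,\dots,s_i)(s_{i+1},\dots,s_k)$, the reversal anti-automorphism $\tau$ of $\Q_p\nc$, and the ``local Frobenius correction'' operator $\theta$ that sends a block $e_0^{s-1}e_1$ to $\sum_{\ell\ge 0}\frac{(s)_\ell}{\ell!}p^\ell\,e_0^{s+\ell-1}e_1$ (the Taylor expansion of a local $(1-pe_0)^{-1}$-type factor, reflecting that $\phi_p$ scales the tangent vector at $0$ by $p$). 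The sign $(-1)^{s_1+\dots+s_i}$ is then what $\tau$ contributes on a word of that weight, so the theorem becomes a single identity in $\Q_p\nc$ expressing $\mathcal{G}$ through $\Phi_p$, $\tau$, and $\theta$.

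To establish that identity I would invoke the Frobenius equation for $\Phi_p$: since $\phi_p$ fixes $0$ and $1$ but scales the tangential basepoints $t_{01},t_{10}$ by $p$, the element $\phi_p(\gamma)$ satisfies a cocycle relation tying $\Phi_p$ to the canonical local de Rham paths at the two punctures and to the Frobenius-equivariant comparison function attached to the lift $\phi(z)=z^p$. The multiple harmonic sums enter because, writing $\omega=e_0\frac{dz}{z}+e_1\frac{dz}{z-1}$, the pullback $\phi^*\omega=p\,e_0\frac{dz}{z}+e_1\sum_{\zeta^p=1}\frac{dz}{z-\zeta}$ acquires simple poles at every $p$-th root of unity, and summing the resulting level-$p$ cyclotomic iterated integrals over the $p$-th roots of unity collapses — via $\prod_{\zeta^p=1,\,\zeta\ne 1}(1-z/\zeta)=1+z+\cdots+z^{p-1}$ and its iterated-integral analogues — to sums over $1\le n\le p-1$; the Pochhammer weights then come from the expansion $\frac{1}{(p-n)^s}=(-1)^s\sum_{\ell\ge 0}\binom{s+\ell-1}{\ell}p^\ell n^{-s-\ell}$ underlying the symmetry $n\leftrightarrow p-n$ of $\{1,\dots,p-1\}$. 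Because $\phi_p$ multiplies $e_0$ by $p$, the infinite product implicit in the cocycle relation converges $p$-adically, and extracting the coefficient of $e_0^{s_1-1}e_1\cdots e_0^{s_k-1}e_1$ from the resulting closed form for $\mathcal{G}$ yields \eqref{eqjar}. Convergence of the series in \eqref{eqjar} itself follows from the integrality estimate \eqref{int}, which forces $v_p\bigl(p^{\ell_1+\dots+\ell_i}\zeta_p(s_i+\ell_i,\dots,s_1+\ell_1)\bigr)\to\infty$ as $\ell_1+\dots+\ell_i\to\infty$.

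The main obstacle is the middle step: pinning down the Frobenius cocycle relation precisely enough — in particular the emergence of the truncation at $p-1$ and of the reversal $\tau$, the latter reflecting the conjugation by $\Phi_p$ that expresses the local monodromy at $1$ in global de Rham coordinates — and then matching its expansion to the combinatorics of the double sum, including the conventions chosen to extend $\zeta_p$ to non-admissible compositions. One would either feed in the known description of $\phi_p$ on $\pp{dR}$ together with Deligne's compatibility of Frobenius with tangential basepoints, or, following Jarossay \cite{Jar16}, recast everything as a statement about the dynamics on $\Z_p$ of the map induced by $\phi_p$, for which the multiple harmonic sums become Birkhoff sums and the $p$-adic multiple zeta values the limiting cocycle. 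Tracking the signs, the $p^{|\bs|}$-normalization, the branch of $\log^{(p)}$, and the exact shape of the local corrections is the delicate bookkeeping, but the form of \eqref{eqjar} essentially dictates what the outcome must be.
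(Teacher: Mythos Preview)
The paper does not contain its own proof of this theorem: Theorem~\ref{pj} is stated with the attribution ``\cite{Jar16}, Theorem 1.2'' and is used as a black box throughout. There is therefore no proof in the paper to compare your proposal against.

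As to your proposal itself: it is not a proof but a programme, and you say so yourself (``The main obstacle is the middle step''). The overall architecture you describe --- packaging both sides into noncommutative generating series in $\Q_p\nc$, and deducing the identity from a cocycle relation for the crystalline Frobenius $\phi_p$ on the de Rham fundamental torsor --- is broadly in the spirit of Jarossay's argument and of the Deligne--\"Unver--Furusho circle of ideas. Your identification of the multiple harmonic sums as coefficients of the finite product $\mathcal{G}=\prod_{n=1}^{p-1}(1+(n-e_0)^{-1}e_1)$ is correct, and the convergence remark via \eqref{int} is fine.

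The genuine gap is exactly where you flag it. You have not actually written down the Frobenius equation satisfied by $\Phi_p$, nor shown that its expansion produces precisely the deconcatenation, the reversal $\tau$, the Pochhammer weights, and the truncation at $p-1$ in the combination appearing in \eqref{eqjar}. Your heuristic for the Pochhammer factors via $(p-n)^{-s}=(-1)^s\sum_{\ell\ge 0}\binom{s+\ell-1}{\ell}p^\ell n^{-s-\ell}$ is suggestive, but you have not connected it to a specific term in the cocycle relation; similarly the appearance of level-$p$ cyclotomic poles under $\phi^*\omega$ is correct in spirit but you do not carry out the collapse to $\{1,\dots,p-1\}$. Until those computations are done and the normalizations (the $p^{|\bs|}$ scaling, the extension of $\zeta_p$ to non-admissible indices, the sign from $\tau$) are reconciled, what you have is a plausible outline rather than a proof. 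If you intend to complete it, the cleanest route is to follow Jarossay's formulation directly, where the identity is obtained from an explicit factorization of the Frobenius-twisted path and the combinatorics you anticipate drop out mechanically.
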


\noindent Note: our normalization for $\zeta_p(\bs)$ differs from that in \cite{Jar16} by a factor of $p^{|\bs|}$.

\subsection{The target ring}
In \cite{Ros13}, the author defined a finite analogue of the multiple zeta function. The codomain of this map was a complete topological ring related to the finite adeles. Here we consider this ring with $p$ inverted. Define\footnote{The integral version of this ring was denoted $\hat{\cA}$ in \cite{Ros13}. Here we adopt the notation of \cite{Jar16a} to avoid conflict with the use of $\cA$ for the ring of motivic \mzv{}s modulo $\zeta(2)$.}
\[
\Ai:=\frac{\left\{(a_p)\in\prod_p\Q_p:v_p(a_p)\text{ bounded below}\right\}}{\left\{(a_p)\in\prod_p\Q_p:v_p(a_p)\to\infty \text{ as }p\to\infty\right\}}.
\]
We equip $\Ai$ with a decreasing, exhaustive, separated filtration:
\[
\fil^n\Ai=\left\{(a_p)\in\Ai: \liminf v_p(a_p)\geq n\right\}.
\]
The subsets $\fil^n\Ai\subset\Ai$ form a neighborhood basis of $0$ for a topology, making $\Ai$ into a topological ring. This ring is complete because it is the quotient  of the complete, first countable ring $\prod_p\Q_p$ with the uniform topology (i.e., the sets $\prod_p p^n\Z_p$ are a neighborhood basis of $0$) modulo a closed ideal.

Convergence in $\Ai$ is distinct from $p$-adic convergence. A sequence $(a_{p,1})$, $(a_{p,2})$, $\ldots$ converges to $(b_p)\in\Ai$ if and only if for every positive integer $m$, there exists $N=N(m)$ such that for all $n>N$ the \ppc{}
\[
a_{p,n}\equiv b_p\mod p^m\Z_p\gap
\]
holds for all but finitely many $p$. The finite set of primes for which the congruence fails may depend on $n$, so convergence in $\Ai$ does \emph{not} imply that $a_{p,n}$ converges $p$-adically to $b_p$ for any $p$ at all. An example is
\[
a_{p,n}=\begin{cases}1:p\leq n,\\0:p>n.\end{cases}
\]
Then $a_{p,n}\to 1$ for every $p$, but $(a_{p,n})\to 0$ in $\Ai$ (and in fact $(a_{p,n})$ is the constant sequence $0$).

The ring $\Ai$ is non-Archimedean, so an infinite series converges if and only if the terms go to $0$ (that is, for every integer $n$, all but finite many terms in the series are in $\fil^n\Ai$). Concretely, if $(a_{p,n})\to 0$, we have $\sum_n (a_{p,n})=(a_p)$, where
\[
a_p=\hspace{-10mm}\sum_{\substack{n\\ v_p(a_{p,n})\geq \liminf_\ell v_\ell(a_{\ell,n})}}\hspace{-10mm} a_{p,n}.
\]

\subsection{The period map}
Let $\M$ be the ring of motivic \mzv{}s (see \cite{Bro12}). It is a commutative $\Q$-algebra spanned by elements $\zm(\bs)$, called motivic \mzv{}s. We consider the quotient $\MM:=\M/\zm(2)\M$, and write $\zmm(\bs)$ for the image of $\zm(\bs)$ in $\MM$. For each prime $p$, there is a $p$-adic period map $per_p:\MM\to\Q_p$, taking $\zmm(\bs)$ to $\zeta_p(\bs)$.

We construct a new period map, with target $\Ai$. The domain of our period map is the ring of formal Laurent series
\[
\MM((T)) :=\left\{\sum_{k=-N}^\infty z_k T^k:N\in\Z, z_k\in\MM\right\}
\]
over $\MM$. Like $\Ai$, the ring $\MM((T))$ is complete with respect to the decreasing filtration
\[
\fil^n\MM((T)):=T^n\MM[[T]].
\]

The integrality result \eqref{int} implies that $\zeta_p(\bs)\in\Z_p$ for $p>|\bs|$, so that $(p^k\zeta_p(\bs))\in\fil^k\Ai$ for every $k$. This means infinite linear combinations of terms $(p^k\zeta_p(\bs))$ with rational coefficients converge in $\Ai$ provided that the terms satisfy $k\to\infty$.
\begin{definition}[Period map]
\label{defperiod}
The \emph{completed finite period map} is the continuous ring homomorphism
\begin{align}
\per:\MM((T))&\to\Ai,\\
\sum_{k=-N}^\infty z_k T^k&\mapsto\sum_{k=-N}^\infty \lp p^k per_p(z_k)\rp.\label{zpseries}
\end{align}
\end{definition}
The map $\per$ take $\fil^n\MM((T))$ into $\fil^n\Ai$. We expect that $\per$ is compatible with the filtrations in a stronger sense, which would be an analogue of the Grothendieck period conjecture. The conjecture takes the following form.
\begin{conjecture}[Period conjecture]
\label{conjper}
For every integer $n$,
\[
\per^{-1}\lp\fil^n\Ai\rp=\fil^n\MM((T)).
\]
In particular, $\per$ is injective.
\end{conjecture}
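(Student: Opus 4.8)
The inclusion $\fil^{n}\MM((T))\subseteq\per^{-1}\lp\fil^{n}\Ai\rp$ is exactly the remark made above that $\per$ carries $\fil^{n}\MM((T))$ into $\fil^{n}\Ai$, and it is immediate from \eqref{int}; all the content lies in the reverse inclusion. Since that inclusion is (as the conjecture acknowledges) genuinely open, the plan is to reduce it to a single, concrete injectivity statement by passing to associated graded rings, and then to explain where the obstacle sits. As $\per$ is a filtered ring homomorphism and the filtration on $\MM((T))$ is exhaustive and separated, a routine argument in filtered linear algebra shows that the family of equalities $\per^{-1}\lp\fil^{n}\Ai\rp=\fil^{n}\MM((T))$, $n\in\Z$, holds \emph{if and only if} the associated graded map $\mathrm{gr}\,\per$ is injective: if $x$ lies in $\fil^{m}\MM((T))\setminus\fil^{m+1}\MM((T))$ with $m<n$, then injectivity of $\mathrm{gr}^{m}\,\per$ forces $\per(x)\notin\fil^{m+1}\Ai\supseteq\fil^{n}\Ai$, so $x\notin\per^{-1}\lp\fil^{n}\Ai\rp$; and conversely.

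It remains to identify the graded pieces. One has $\mathrm{gr}^{n}\MM((T))\cong\MM$ via $zT^{n}\leftrightarrow z$, while the map $(a_{p})_{p}\mapsto\bigl(\overline{a_{p}/p^{n}}\bigr)_{p}$ gives, for every $n$, an isomorphism $\mathrm{gr}^{n}\Ai\xrightarrow{\ \sim\ }\mathcal{F}:=\prod_{p}\mathbb{F}_{p}\big/\bigoplus_{p}\mathbb{F}_{p}$. Under these isomorphisms $\mathrm{gr}^{n}\,\per$ does not depend on $n$ (the factor $p^{n}$ produced by $\per(T^{n})$ is exactly what one divides out in forming $\mathrm{gr}^{n}\Ai$), and equals the ring homomorphism
\[
\overline{per}\colon\ \MM\ \longrightarrow\ \mathcal{F},\qquad \zmm(\bs)\ \longmapsto\ \bigl(\zeta_{p}(\bs)\bmod p\bigr)_{p},
\]
which is well defined since $\zeta_{p}(\bs)\in\Z_{p}$ for $p>|\bs|$. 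Thus Conjecture~\ref{conjper} is \emph{equivalent} to the injectivity of the single map $\overline{per}$; concretely, to the statement that every nonzero $z\in\MM$ satisfies $v_{p}\bigl(per_{p}(z)\bigr)=0$ for infinitely many primes $p$.

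This reformulation is where I expect the real obstacle to lie, and I do not see how to prove it outright: it belongs to the same circle of ideas as the Kaneko--Zagier conjecture on finite multiple zeta values and Yamashita's conjecture on the injectivity of $per_{p}$. Reducing Jarossay's formula (Theorem~\ref{pj}) modulo $p$ shows that
\[
\H(\bs)\equiv\sum_{i=0}^{k}(-1)^{s_{1}+\ldots+s_{i}}\,\zeta_{p}(s_{i},\ldots,s_{1})\,\zeta_{p}(s_{i+1},\ldots,s_{k})\pmod{p\Z_{p}}
\]
for $p\gg 0$, so every finite multiple zeta value $\bigl(\H(\bs)\bmod p\bigr)_{p}$ already lies in the image of $\overline{per}$; injectivity of $\overline{per}$ would therefore pin the algebra of finite multiple zeta values down to $\MM$, a maximal-rank statement for which no ``motivic'' lower bound is currently available. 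One can accumulate numerical evidence by evaluating $p$-adic multiple zeta values modulo small powers of $p$ over many primes, but a finite computation cannot settle a statement quantified over all primes; a proof of the general case appears to require genuinely new input, for instance a motivic construction of the finite multiple zeta value realization that would supply the missing lower bound on the rank of $\overline{per}$.
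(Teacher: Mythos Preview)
The statement is a \emph{conjecture}, and the paper does not prove it; immediately after stating it, the paper only records the equivalent reformulation that for every nonzero element $\sum_i c_i\zmm(\bs_i)\in\MM$, the $p$-adic number $\sum_i c_i\zeta_p(\bs_i)$ lies in $\Z_p^\times$ for infinitely many $p$. Your analysis is entirely in line with this: you correctly isolate the trivial inclusion, explicitly flag the reverse inclusion as open, and then derive exactly the paper's reformulation via the associated-graded argument (your condition ``$v_p(per_p(z))=0$ for infinitely many $p$'' is verbatim the paper's ``$\in\Z_p^\times$ for infinitely many $p$''). The only extra content you supply is the clean packaging of this reformulation as injectivity of a single map $\overline{per}\colon\MM\to\prod_p\F_p/\bigoplus_p\F_p$ on the associated graded, together with the contextual remarks tying it to Kaneko--Zagier and Yamashita; this is a helpful elaboration but not a different approach, and you are right that no proof is currently available.
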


Conjecture \ref{conjper} is equivalent to the statement that for every non-zero element
\[
\sum_{i=1}^n c_i\zmm(\bs_i)\neq 0\in\MM,
\]
the $p$-adic number
\[
\sum_{i=1}^n c_i\zeta_p(\bs_i),
\]
which a priori is an element of $\Z_p$ for all $p$ sufficiently large, is actually in $\Z_p^\times$ for infinitely many $p$.

\begin{definition}
\label{defmotmhs}
Let $\bs$ be a composition. The \emph{motivic multiple harmonic sum} $\Hm(\bs)$ is the following element of $\MM((T))$:
\begin{equation}
\label{motmhs}
\sum_{i=0}^k \sum_{\ell_1,\ldots,\ell_i\geq 0} \prod_{j=1}^i {-s_j\choose \ell_j}\zmm(s_i+\ell_i,\ldots,s_1+\ell_1)\zmm(s_{i+1},\ldots,s_k)T^{\sum \ell_i}.
\end{equation}
\end{definition}

Formula \eqref{eqjar} implies that
\[
\per(\Hm(\bs))=\lp\H(\bs)\rp\in\Ai.
\]

\section{The MHS algebra}
A useful technique for proving \ppc{}s, used by the author in \cite{Ros16a}, is to express a quantity that appears in terms of multiple harmonic sums. For example, consider the hypergeometric sum
\[
S_n:=\sum_{k=0}^n {n\choose k}^4.
\]
It is known that $S_n$ satisfies a recurrence relation of length $3$, which can be found with Zeilberger's algorithm. For $n=p$, we can compute
\begin{align}
S_p&=2+\sum_{k=1}^{p-1}{p\choose k}^4\\
&=2+\sum_{k=1}^{p-1} \frac{p^4}{k^4}\left[\lp1-\frac{p}{1}\rp\cdots\lp1-\frac{p}{k-1}\rp\right]^4\\
&=2+\sum_{k=1}^{p-1} \frac{p^4}{k^4}\left[\sum_{n\geq 0}(-1)^n p^n H_{k-1}(1^n)\right]^4.\label{fourth}
\end{align}
A product of multiple harmonic sums with the same limit of summation can be written as a linear combination of multiple harmonic sums with the same limit, for example $H_n(1)H_n(2)=H_n(1,2)+H_n(2,1)+H_n(3)$.
This is the so-called \emph{series shuffle} or \emph{stuffle} product. If we expand out the fourth power in \eqref{fourth} this way and concatenate a $4$ at the beginning of each of the resulting compositions, we find that there are integer coefficients $c_1,c_2,\ldots$ and compositions $\bs_1,\bs_2,\ldots$ with $|\bs_i|\to\infty$, such that
\begin{equation}
\label{exexp}
S_p=\sum_{i=1}^\infty c_i p^{|\bs_i|}\H(\bs_i).
\end{equation}
It is not hard to compute the first few terms to obtain
\begin{equation}
\label{eqsp}
S_P\equiv 2+p^4\H(4)-4p^5\H(4,1)\mod p^6.
\end{equation}
Now we can, for example, combine \eqref{eqsp} with results of \cite{Zha08} to get a \ppc{} for $S_p$ in terms of Bernoulli numbers:
\[
S_p\equiv 2-\frac{16}{5}p^5B_{p-5}\mod p^6.
\]

We can combine \eqref{exexp} with \eqref{eqjar} to obtain an expression for $S_p$ in terms of $p$-adic \mzv{}s. The expression has many terms, but can be simplified using known relations among $p$-adic \mzv{}s to obtain a series that starts
\begin{gather}
\label{thingy}
S_p=2 - 16p^{5}\zeta_p(5) - 20p^{6}\zeta_p(3)^{2} - 143p^{7}\zeta_p(7)\\
 - p^{8}\left(456\zeta_p(3)\zeta_p(5)+ \frac{696}{5}\zeta_p(5, 3)\right) + O(p^{9}).
\end{gather}

Expansions like \eqref{exexp} are possible in a variety of other cases. The author makes the following definition in \cite{Ros16a}:
\begin{definition}
\label{defmhsalg}
The \emph{MHS algebra} is the subset $\mhs\subset\Ai$ consisting of elements $(a_p)$ such that there exist rational numbers $c_1,c_2,\ldots$, integers $b_1,b_2,\ldots$ going to infinity, and compositions $\bs_1,\bs_2,\ldots$, all independent of $p$, such that
\begin{equation}
\label{eqsumA}
(a_p)=\sum_{i=1}^\infty \lp c_i p^{b_i}\H(\bs_i)\rp\in\Ai.
\end{equation}
Concretely, this is equivalent to the condition that for every integer $n$, the congruence
\begin{equation}
\label{eqss}
a_p\equiv \sum_{\substack{i=1\\b_i<n}}^\infty c_i p^{b_i}\H(\bs_i)\mod p^n
\end{equation}
holds for all sufficiently large $p$ (note that the sum of the right hand side of \eqref{eqss} is finite).
\end{definition}
We sometimes abuse notation slightly and say that a quantity $a_p$ (depending on $p$) is in the MHS algebra when we mean $(a_p)\in\Ai$ is in the MHS algebra. The computation \eqref{fourth} shows that the hypergeometric sum $S_p$ is in the MHS algebra.

For each integer $n$, the quotient $\Ai/\fil^n\Ai$ is has the cardinality of the continuum. However, modulo $\fil^n\Ai$, an element of the MHS algebra can be described by a finite amount of data, hence the image of the MHS algebra in $\Ai/\fil^n\Ai$ is countable. It perhaps surprising, then, to find that many familiar elementary quantities are in the MHS algebra. The following theorem records a few examples.


\begin{theorem}[\cite{Ros16a}]
\label{thmhs}
The following quantities are in the  MHS algebra:
\begin{itemize}
\item the multiple harmonic sum $\displaystyle H_{f(p)}(\bs)$, where $f(x)\in\Z[x]$ has positive leading coefficient and $\bs$ is a composition,
\item the $p$-restricted multiple harmonic sum
\[
H_{f(p)}^{(p)}(\bs):=\sum_{\substack{f(p)\geq n_1>\ldots>n_k\geq 1\\p\nmid n_1\cdots n_k}}\frac{1}{n_1^{s_1}\cdots n_k^{s_k}},
\]
where $f(x)\in\Z[x]$ has positive leading coefficient,
\item the binomial coefficient $\displaystyle {f(p)\choose g(p)}$, for fixed polynomials $f(x),g(x)\in\Z[x]$ with positive leading coefficient,
\item the \ape{} numbers $a_{p-1}$ and $a_p$, where
\[
a_{N}:=\sum_{n=0}^{N}{N\choose n}^2{N+n
\choose n}^2.
\]
\item For fixed $r$, $k$, the sum
\[
\sum_{\substack{n_1+\ldots+n_k=p^r\\p\nmid n_1\cdots n_k}}\frac{1}{n_1\ldots n_k}.
\]
\end{itemize}

\end{theorem}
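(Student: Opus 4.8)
\noindent\emph{Strategy.} I would prove each item by uniformly expanding the given $p$-indexed quantity into a $p$-adically convergent series of the precise shape $\sum_i c_ip^{b_i}\H(\bs_i)$ of Definition~\ref{defmhsalg}. Three facts are used throughout. First, $\mhs$ is a subring of $\Ai$: closure under addition is immediate, and a product of two admissible series becomes admissible after expanding each $\H(\bs)\H(\bs')=\sum_{\bs''}n^{\bs''}_{\bs,\bs'}\H(\bs'')$ by the stuffle product ($n^{\bs''}_{\bs,\bs'}\in\Z$, $|\bs''|=|\bs|+|\bs'|$), the resulting double series converging in $\Ai$ since admissible exponents are bounded below and tend to $+\infty$. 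Second, $\Q\subset\mhs$ (empty composition), and for any nonzero $g\in\Z[x]$ the sequence $(1/g(p))$ lies in $\mhs$: writing $g(p)=b\,p^{e}(1+p\,r(p))$ with $b\in\Q^{\times}$, $r\in\Z[x]$, expand $1/g(p)=(b\,p^{e})^{-1}\sum_{m\ge 0}(-p\,r(p))^{m}$, which converges in $\Ai$. Third, for $f\in\Z[x]$ with positive leading coefficient and any fixed $j\ge 1$, the function $p\mapsto\lfloor f(p)/p^{j}\rfloor$ agrees for all large $p$ with a fixed polynomial of positive leading coefficient; since membership in $\mhs$ is insensitive to finitely many primes, such floors may be replaced by polynomials at will. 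The workhorse is the expansion $(n+mp)^{-s}=\sum_{\ell\ge 0}\binom{-s}{\ell}(mp)^{\ell}n^{-s-\ell}$, valid $p$-adically when $p\nmid n$, together with Faulhaber's formula, which makes $\sum_{m=0}^{M-1}m^{\ell}$ a polynomial in $M$ of degree $\ell+1$.

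\medskip\noindent\emph{First and second items.} I would show $H_{f(p)}(\bs)\in\mhs$ by induction on $\deg f$, with an inner induction on the depth $\ell(\bs)$; for $\deg f=0$ the sum is a rational constant. For the step, subtract the constant term $a_0$ of $f$: the difference $H_{f(p)}(\bs)-H_{f(p)-a_0}(\bs)$ is a finite sum of products of factors $(f(p)+d)^{-s_i}$ (in $\mhs$ by the second fact) with strictly lower-depth harmonic sums of limit $f(p)-a_0$ (inner induction), so we may assume $f(p)=p\,g(p)$ with $\deg g=\deg f-1$. Writing each index as $n_i=m_ip+j_i$ with $0\le j_i\le p-1$, $0\le m_i\le g(p)-1$ — the ordering forcing $m_1\ge\dots\ge m_k$, the $j$'s strictly decreasing within a constant-$m$ run, and indices with $j_i=0$ split off and contributing a factor $p^{-s_i}$ — then applying the binomial expansion to the surviving factors and (iterated) Faulhaber summation, with summation by parts where negative exponents arise, to the inner block sums, rewrites $H_{pg(p)}(\bs)$ as a convergent combination, with polynomial-in-$p$ coefficients, of products $\H(\bs')\cdot H_{g(p)}(\bs'')$; the first factor is a generator of $\mhs$, the second lies in $\mhs$ by the outer induction, and $\mhs$ is a ring. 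Counting the powers of $p$ produced (each binomial expansion contributing at least $\ell_i$ of them) gives convergence in $\Ai$. The $p$-restricted harmonic sums are handled by the same argument, simply omitting from the block decomposition the indices that are multiples of $p$ (equivalently, the terms with $j_i=0$); the auxiliary floors that occur are polynomials by the third fact.

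\medskip\noindent\emph{Third and fourth items.} For the \ape{} numbers I would expand directly: for $1\le n\le p-1$ (writing $1^{m}$ for a string of $m$ ones)
\[
\binom{p-1}{n}=(-1)^{n}\sum_{m\ge0}(-p)^{m}H_{n}(1^{m}),\qquad
\binom{p-1+n}{n}=\frac{p}{n}\sum_{m\ge0}p^{m}H_{n-1}(1^{m}),
\]
and likewise $\binom{p}{n}=\tfrac{(-1)^{n-1}p}{n}\sum_{m}(-p)^{m}H_{n-1}(1^{m})$, $\binom{p+n}{n}=\sum_{m}p^{m}H_{n}(1^{m})$. Squaring, multiplying out with the stuffle product, converting everything via $H_{n}(t_1,\dots,t_r)=H_{n-1}(t_1,\dots,t_r)+n^{-t_1}H_{n-1}(t_2,\dots,t_r)$ into $\mhs$-shaped expressions $\sum d_{w}\,n^{-e_{w}}H_{n-1}(\bs')$, and summing over $n$ using $\sum_{n=1}^{p-1}n^{-t}H_{n-1}(\bs')=\H(t,\bs')$, realizes $a_{p-1}$ and $a_p$ as elements of $\mhs$ (boundary terms $n=0$, and $n=p$ for $a_p$, handled separately, the latter being $\binom{2p}{p}^{2}\in\mhs$). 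For $\binom{f(p)}{g(p)}$ — which is $0$, hence in $\mhs$, unless $f(p)\ge g(p)\ge 0$ for large $p$ — I would induct on $\deg g$, the base case $\deg g=0$ being a polynomial in $p$ with rational coefficients. For the step, group the factors of $\binom{f(p)}{g(p)}=\prod_{i=1}^{g(p)}\frac{h(p)+i}{i}$ (with $h=f-g$) by the residue of $i$ modulo $p$: the factors with $p\mid i$ pair off against the equally many factors with $p\mid(h(p)+i)$, and after extracting the explicit power $p^{C}$ with $C=v_{p}\binom{f(p)}{g(p)}$ — eventually constant by a base-$p$ carry count — the product reorganizes, up to an explicit $\mhs$-valued rational-function-in-$p$ factor (assembled from the $p$-unit factors via $\prod_{i=1}^{p-1}(1+cp/i)=\sum_{m}(cp)^{m}\H(1^{m})$), into a binomial coefficient whose lower entry is $\lfloor g(p)/p\rfloor$, of strictly smaller degree.

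\medskip\noindent\emph{Fifth item, and the main obstacle.} For $T_{r,k}=\sum_{n_1+\dots+n_k=p^{r},\ p\nmid n_i}(n_1\cdots n_k)^{-1}$, I would start from the classical identity $\sum_{n_1+\dots+n_k=N}(n_1\cdots n_k)^{-1}=\tfrac{k!}{N}H_{N-1}(1^{k-1})$; inclusion–exclusion over which $n_i$ are divisible by $p$ then writes $T_{r,k}$ as a $\Q$-combination, with coefficients that are bounded negative powers of $p$, of convolution sums
\[
\Sigma_j=\sum_{M}\frac{H_{p(p^{r-1}-M)-1}(1^{\,k-j-1})\,H_{M-1}(1^{\,j-1})}{(p^{r-1}-M)\,M},
\]
the extreme cases $j\in\{0,k\}$ reducing at once to the first item (they are rational multiples of powers of $p$ times $H_{p^{r}-1}(1^{k-1})$ or $H_{p^{r-1}-1}(1^{k-1})$). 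Each remaining $\Sigma_j$ is treated by block-decomposing $H_{p(p^{r-1}-M)-1}(1^{k-j-1})$ as in the first item and carrying out the resulting $M$-convolution, which telescopes into products of multiple harmonic sums; an induction on $r$, with base case $r=1$ (where every part is automatically $<p$), completes the argument. I expect the main obstacle to be the valuation bookkeeping in the third and fifth items: expanding $\binom{f(p)}{g(p)}$ (or the inner sums $H_{p(p^{r-1}-M)-1}$) through factorials introduces $p$-divisible and $p$-unit pieces whose $p$-adic valuations individually grow without bound, and the substance of the proof is to show that these telescope exactly — as they must, since $\binom{f(p)}{g(p)}$ is an integer of bounded $p$-adic valuation — leaving a genuine element of the MHS algebra. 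The remaining steps are systematic, if lengthy, expand-and-estimate arguments.
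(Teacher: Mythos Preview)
The paper does not prove Theorem~\ref{thmhs}; it simply quotes the result from \cite{Ros16a} and uses it as a black box, so there is no ``paper's own proof'' against which to compare your proposal.

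That said, your outline is broadly the right shape, and in particular matches the kinds of manipulations the paper does carry out in special cases (e.g.\ the worked example $S_p=\sum_k\binom{p}{k}^4$ in \eqref{fourth}--\eqref{exexp}, the binomial expansion \eqref{2pcp}, and the motivic binomial coefficient \eqref{eqmb}): expand $p$-unit factors via $(1\pm p/i)$ into generating series for $\H(1^m)$, block-decompose indices modulo $p$, use stuffle to turn products of equal-limit MHS into linear combinations, and reduce polynomial limits to lower degree by induction. A few cautionary remarks: your ``second fact'' as stated is slightly misformulated (a general $g\in\Z[x]$ with nonzero constant term is $a_0(1+p\cdot r(p))$ with $r\in\Q[x]$, not $\Z[x]$; the expansion still works but the coefficients $c_i$ in Definition~\ref{defmhsalg} are allowed to be rational, so this is harmless). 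In the first item, the step where you ``apply Faulhaber summation, with summation by parts where negative exponents arise'' to the inner block sums is where most of the work hides, and your sketch glosses over the fact that after block-decomposing $n_i=m_ip+j_i$ the resulting sum over the $m$'s is a nested sum with \emph{inequalities} rather than equalities among the $m_i$'s, so what emerges is genuinely a multiple harmonic sum in the $m$-variables with limit $g(p)$---this is what makes the induction on $\deg f$ run. For the binomial coefficient, your observation that $v_p\binom{f(p)}{g(p)}$ is eventually constant (Kummer's carry count) is the key point that keeps the valuations bounded, and the rest is indeed just bookkeeping. The fifth item is the most delicate; your inclusion--exclusion reduction to convolutions of MHS with polynomial limits is correct in spirit, but the claim that the $M$-convolution ``telescopes into products of multiple harmonic sums'' needs more care, since the factor $H_{p(p^{r-1}-M)-1}(1^{k-j-1})$ has its limit depending on the summation variable $M$ in a way that is not immediately of stuffle type.
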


We prove the following result. 
\begin{theorem}
\label{propim}
The image of $\per$ is exactly the MHS algebra.
\end{theorem}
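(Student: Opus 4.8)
The plan is to establish the two inclusions separately. The inclusion $\mhs\subseteq\operatorname{image}(\per)$ is essentially formal: given $(a_p)=\sum_i(c_ip^{b_i}\H(\bs_i))\in\mhs$ as in Definition~\ref{defmhsalg}, note that $\Hm(\bs_i)\in\MM[[T]]$ by \eqref{motmhs}, so $T^{b_i}\Hm(\bs_i)\in\fil^{b_i}\MM((T))$ and the series $x:=\sum_i c_i T^{b_i}\Hm(\bs_i)$ converges in $\MM((T))$. Since $\per$ is a continuous ring homomorphism with $\per(T)=(p)$ and $\per(\Hm(\bs))=(\H(\bs))$, one gets $\per(x)=\sum_i c_i(p^{b_i})(\H(\bs_i))=(a_p)$.

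For the reverse inclusion, the first step is to write an arbitrary $x\in\MM((T))$ as a convergent series $\sum_{k\geq -N}\sum_j c_{k,j}\zmm(\bs_{k,j})T^k$, with only finitely many $j$ for each $k$ (possible since $\MM$ is $\Q$-spanned by the $\zmm(\bs)$), so that $\per(x)=\sum_{k,j}(c_{k,j}p^k\zeta_p(\bs_{k,j}))$ by continuity. Thus it suffices to prove the following \emph{uniform representability} statement: there is an absolute constant $c$ such that every $(\zeta_p(\bs))\in\Ai$ admits a representation $\sum_m d_m(p^{e_m}\H(\bs'_m))$ of the form \eqref{eqsumA} with all exponents $e_m\geq -c$. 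Granting this, one substitutes these representations into $\per(x)$ and rearranges the resulting family (legitimate in the complete non-Archimedean ring $\Ai$); every exponent of the rearranged sum is $\geq k-c\geq -N-c$, and for each integer $M$ only finitely many terms have exponent $<M$ — there are finitely many $k\in[-N,M+c)$, each with finitely many $j$ and, for each $(k,j)$, finitely many terms of the chosen representation of $(\zeta_p(\bs_{k,j}))$ with $e_m<M-k$, while every $k\geq M+c$ contributes exponents $\geq M$. This is exactly the shape demanded by Definition~\ref{defmhsalg}, so $\per(x)\in\mhs$.

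The uniform representability statement is where the work lies, and I would prove it by inverting the motivic Jarossay formula \eqref{motmhs} inside $\MM((T))$ — a ring whose filtration is simply by powers of $T$. One checks that the coefficient $[\Hm(\bs)]_{T^m}$ is homogeneous of weight $|\bs|+m$ and that $[\Hm(\bs)]_{T^0}=0$ (the latter being the image in $\MM$ of the path-reversal relation among motivic \mzv{}s together with $\zmm(2n)=0$; it mirrors $H_{p-1}(\bs)\equiv0\bmod p$). The crucial input is a spanning lemma: for a suitable absolute constant $c$ and every weight $v$, the coefficients $[\Hm(\bs)]_{T^m}$ with $|\bs|+m=v$ and $1\le m\le c$ span the weight-$v$ part of $\MM$. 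Granting this, one solves for $\zmm(\bs')$ of weight $v$ by induction on $v$: subtract a suitable $\Q$-combination of the $T^{-m}\Hm(\bs)$ to cancel the weight-$v$ part, leaving a remainder in $T\MM[[T]]$ all of whose $\MM$-coefficients have weight $>v$; apply the inductive hypothesis to those coefficients and iterate. The successive remainders lie in $\fil^1,\fil^2,\dots$, so the procedure converges in $\MM((T))$ to an identity $\zmm(\bs')=\sum_{e,\bs}\mu_{e,\bs}T^e\Hm(\bs)$ with all $e\geq -c$ and only finitely many terms outside any $\fil^M$; applying $\per$ produces a representation of $(\zeta_p(\bs'))$ with exponents $\geq -c$. (En route this shows $\MM((T))$ is topologically generated by the $\Hm(\bs)$ and $T^{\pm1}$.)

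The main obstacle is the spanning lemma — that the leading (low-$T$-order) coefficients of the motivic multiple harmonic sums span $\MM$ in each weight, uniformly in the $T$-order required. The depth-one case is clear, since up to sign $[\Hm(w-1)]_{T^1}=(w-1)\zmm(w)$, which is nonzero precisely when $w$ is odd (and $\MM$ is spanned in weight $w$ by $\zmm(w)$ in that case), while the depth-one recursion stays within depth one. For the general case I would appeal to the structure of the motivic coaction on $\M$ (Brown's generation and freeness theorems), recasting the lemma as surjectivity of a weight-raising operator and checking that $c=1$ (or at worst a small constant) suffices. The remaining points — fixing sign and normalization conventions so that \eqref{motmhs} is literally the preimage of \eqref{eqjar}, verifying $[\Hm(\bs)]_{T^0}=0$, and the weight bookkeeping that keeps the iteration's remainders in ever-deeper $\fil^m$ — are routine.
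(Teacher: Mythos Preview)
Your first inclusion and the overall strategy for the second (invert the Jarossay formula to write each $\zmm(\bs)$ as a convergent $\Q((T))$-combination of $\Hm(\bt)$'s with $T$-exponents bounded below by a uniform constant) are exactly right and match the paper's approach. The problem is the specific claim you base the inversion on: $[\Hm(\bs)]_{T^0}=0$, equivalently $H_{p-1}(\bs)\equiv 0\bmod p$, is \emph{false} beyond depth one. For instance $H_{p-1}(2,1)\equiv c\,B_{p-3}\bmod p$ for a nonzero constant $c$, which is generically nonzero; on the motivic side the $T^0$ coefficient of $\Hm(\bs)$ is the symmetric multiple zeta value $\sum_{i=0}^{k}(-1)^{s_1+\cdots+s_i}\zmm(s_i,\ldots,s_1)\zmm(s_{i+1},\ldots,s_k)$ in $\MM$, and these do not vanish in general. (Your ``path-reversal plus $\zmm(2n)=0$'' argument only works in depth one, where the two terms cancel or are killed by $\zmm(2n)=0$; that is presumably what misled you.)

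The irony is that the truth is stronger than what you need: by Yasuda's theorem the $T^0$ coefficients $[\Hm(\bs)]_{T^0}$, ranging over all compositions $\bs$, \emph{span} $\MM$ in each weight. This is precisely the ``spanning lemma'' you were after, with the optimal constant $c=0$, and it makes your inductive inversion go through with all exponents $e\geq 0$ (no negative powers of $T$ needed at all). The paper simply cites this result of Yasuda together with Jarossay's explicit inversion (\cite{Jar16a}, Proposition~3), obtaining $\zmm(\bs_i)=\sum_j d_{i,j}T^{|\bt_{i,j}|-|\bs_i|}\Hm(\bt_{i,j})$ with $|\bt_{i,j}|\geq|\bs_i|$. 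Once you replace your incorrect vanishing claim by Yasuda's spanning result, your argument collapses to the paper's.
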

\begin{proof}
Suppose $(a_p)$ is in the MHS algebra, and let $c_i$, $b_i$, and $\bs_i$ be as in Definition \ref{defmhsalg}. Then the infinite sum
\[
a^\fa:=\sum_{i=1}^\infty c_i T^{b_i}\Hm(\bs_i)
\]
converges in $\MM((T))$, and we have $\per(a^{\fa})=(a_p)$.

Conversely, let
\[
a^{\fa}:=\sum_{i=1}^\infty c_i T^{b_i}\zmm(\bs_i)
\]
be an arbitrary element of $\MM((T))$, with $c_i\in\Q$, $b_i\in\Z$ with $b_i\to\infty$, and $\bs_i$ compositions. Yasuda \cite{Yas14b} has shown that the sum of the terms in \eqref{eqjar} with all $\ell_i=0$, ranging over all compositions $\bs$, generate the space of \mzv{}s modulo $\zeta(2)$. Jarossay (\cite{Jar16a}, Proposition 3) has shown this implies that for each $i$, there exist rational coefficients $d_{i,1},d_{i,2},\ldots$ and compositions $\bt_{i,1},\bt_{i,2},\ldots$ with $|\bs_{i,j}|\geq |\bs_i|$ and $|\bs_{i,j}|\to\infty$, such that
\[
\zmm(\bs_i)= \sum_{j=1}^\infty d_{i,j} T^{|\bt_{i,j}|-|\bs_i|}\Hm(\bt_{i,j}).
\]
Then we have
\[
\per(a^{\fa})=\sum_{i,j} \lp c_id_{i,j} p^{b_i+|\bt_{i,j}|-|\bs_i|}\H(\bs_{i,j})\rp\in\Ai,
\]
so that $\per(a^{\fa})$ is in the MHS algebra.
\end{proof}

\begin{remark}
In \cite{Ros16a}, we also consider the \emph{weighted MHS algebra}, which consists of elements $(a_p)\in\Ai$ for which there is an expansion \eqref{eqsumA} satisfying $b_i=|\bs_i|$ for all $i$. The weight-adic completion $\MMh$ of $\MM$ admits a diagonal embedding
\begin{align*}
\Delta:\MMh&\hookrightarrow\MM((T))\\
\zmm(\bs)&\mapsto \zmm(\bs)T^{|\bs|}.
\end{align*}
The proof of Theorem \ref{propim} also shows that the image of $\per\circ\Delta$ is the weighted MHS algebra.
\end{remark}

\subsection{Motivic lifts}
\label{secmot}
\begin{definition}
Suppose $a_p$ is a quantity depending on $p$. A \emph{motivic lift} of $a_p$ is an element $a^{\fa}\in\MM((T))$ such that $\per(a^{\fa})=(a_p)$.
\end{definition}
Proposition \ref{propim} implies an element of $\Ai$ admits a motivic lift if and only if it is in the MHS algebra. Conjecture \ref{conjper} would imply that motivic lifts are unique.

The motivic multiple harmonic sum $\Hm(\bs)$ is a motivic lift of $\H(\bs)$.
We can use $\Hm(\bs)$ to write down motivic lifts of other elements of the MHS algebra. For example, if $k\geq r\geq 0$ are integers, we have an expression for the binomial coefficient.
\[
{kp\choose rp}={k\choose r}\frac{\displaystyle\prod_{n=k-r}^{k-1}  \lp\sum_{i\geq 0}n^i p^i\H(1^i) \rp   }{\displaystyle\prod_{n=0}^{r-1}  \lp\sum_{i\geq 0}n^ip^i\H(1^i) \rp }.
\]
We define the \emph{motivic binomial coefficient} to be
\begin{equation}
\label{eqmb}
{kp\choose rp}^{\fa}:={k\choose r}\frac{\displaystyle\prod_{n=k-r}^{k-1}  \lp\sum_{i\geq 0}n^i\Hm(1^i)T^i \rp   }{\displaystyle\prod_{n=0}^{r-1}  \lp\sum_{i\geq 0}n^i\Hm(1^i)T^i \rp }\in\MM((T)).
\end{equation}
Note that each factor in the denominator is an element of $\MM[[T]]$ with constant term $1$, so is invertible.

In some cases it is difficult to write down a motivic lifts in closed form, but we can compute arbitrarily many terms with the aid of a computer. This is the case for a class of nested sums of binomial coefficients. We illustrate with an example. Suppose we encounter the quantity
\begin{equation}
\label{eqexample}
\sum_{p-1\geq n\geq m\geq 1}{p+m\choose m}{p\choose n}^2
\end{equation}
in a \ppc{}. Through some manipulations, it can be shown that \eqref{eqexample} is in the MHS algebra, though the expression is messy. With the help of a computer we find that the first few terms of a motivic lift of \eqref{eqexample} are
\begin{gather*}
3\zeta^{\mathfrak{a}}(3)T^{3}+2\zeta^{\mathfrak{a}}(3)T^{4}+\left(-2\zeta^{\mathfrak{a}}(3)+\frac{53}{2}\zeta^{\mathfrak{a}}(5)\right)T^{5}\\
+\left(2\zeta^{\mathfrak{a}}(3)+17\zeta^{\mathfrak{a}}(5)- \frac{1}{2}\zeta^{\mathfrak{a}}(3)^{2}\right)T^{6}+O(T^{7}).
\end{gather*}
In other words we have
\begin{gather*}
\sum_{p-1\geq n\geq m\geq 1}{p+m\choose m}{p\choose n}^2\equiv 3p^{3}\zeta_p(3)+2p^{4}\zeta_p(3)+p^{5}\left(-2\zeta_p(3)+\frac{53}{2}\zeta_p(5)\right)\\+p^{6}\left(2\zeta_p(3)+17\zeta_p(5)- \frac{1}{2}\zeta_p(3)^{2}\right)\mod p^7.
\end{gather*}

To be explicit, the general recipe to write down a motivic lift of an element of the MHS algebra is as follows: given an element of the MHS algebra
\[
(a_p)=\sum_{i=1}^\infty \lp c_i p^{b_i}\H(\bs_i)\rp\in\Ai,
\]
a motivic lift of $(a_p)$ is given by
\[
a^{\fa}:=\sum_{i=1}^\infty c_i \Hm(\bs_i)T^{b_i}\in\MM((T)).
\]

\section{An algorithm for proving \ppc{}s}
\label{secalg}
The author's work \cite{Ros16a} gives an algorithm for finding and proving supercongruences between elements of the MHS algebra. Here, we state a version of this algorithm using motivic lifts.

Suppose we would like to prove a \ppc
\begin{equation}
\label{toprove}
a_p\equiv a'_p\mod p^n,
\end{equation}
where  $a_p$ and $a'_p$ are in the MHS algebra.
\begin{enumerate}
\item We can find positive integers $k$, $k'$, rational coefficients $c_1,\ldots,c_k$ and $c'_1,\ldots,c'_{k'}$, integers $b_1,\ldots,b_k$ and $b'_1,\ldots,b'_{k'}$, and compositions $\bs_1,\ldots,\bs_k$ and $\bs'_1,\ldots,\bs'_{k'}$ such that for all sufficiently large $p$,
\[
a_p\equiv \sum_{i=1}^k c_i p^{b_i}\H(\bs_i),\gap a'_p\equiv\sum_{i=1}^{k'}c'_i p^{b'_i}\H(\bs'_i)\mod p^n.
\]
\item Use \eqref{motmhs} to compute
\begin{equation}
\label{mott}
\sum_{i=1}^k c_i T^{b_i}\Hm(\bs_i),\gap \sum_{i=1}^{k'}c'_i T^{b'_i}\Hm(\bs'_i)\in\MM((T))
\end{equation}
modulo $T^n$. This is a finite computation, and modulo $T^n$ there are unique representatives $a$ and $a'$ of \eqref{mott} living in $\MM[T,T^{-1}]$ whose degree in $T$ is at most $n-1$.
\item Use some source of relations between motivic \mzv{}s (e.g.\ use the \mzv{} data mine \cite{Blu10}) to check whether $a=a'$. If so, we have found a proof of \eqref{toprove}. If not, the truth of Conjecture \ref{conjper} would imply that \eqref{toprove} fails for infinitely many $p$.
\end{enumerate}
We provide software implementing this algorithm, which is described in Appendix A.

\section{Galois theory of supercongreunces}
\label{secgalois}
The motivic Galois group $\Gamma$ of the category of mixed Tate motives over $\Z$ is an affine pro-algebraic group defined over $\Q$, which acts on the ring $\MM$. We let $\Gamma$ fix $T$ to get an action of $\Gamma$ on $\MM((T))$. The truth of Conjecture \ref{conjper} would imply the action descends to the MHS algebra. Specifically, the action of $\Gamma$ on the MHS algebra is computed as follows.
\begin{itemize}
\item Given an element $(a_p)$ of the MHS algebra and $g\in \Gamma(\Q)$, find a motivic lift $\am\in\MM((T))$,
\item let $g$ act on $\am$ to get $g\circ \am\in\MM((T))$, and
\item apply $\per$ to get back an element of the MHS algebra.
\end{itemize}
In the absence of Conjecture \ref{conjper}, the result could depend on the choice of lift $\am$.

There are some computational challenges.
\begin{enumerate}
\item Given an element of the MHS algebra, it is sometimes difficult to write down an expansion in terms of MHS in closed form.

\item The group $\Gamma$ does not have a distinguished coordinate system, so describe the action of $\Gamma$ on $\MM$ requires many arbitrary choices.

\item The result of the group action is an element of $\Ai$ defined by an infinite linear combination of $p$-adic multiple zeta values, and it is not obvious whether this linear combination can be described combinatorially, e.g.\ as a sequence of rational numbers defined by nested sums.
\end{enumerate}

\section{Computations in depth 1}
\label{secd1}
Here, we explicitly compute the Galois action on elements of the MHS algebra in depth $1$, that is, elements for which an expansion \eqref{eqsumA} may be chosen with $\ell(\bs_i)\leq 1$ for all $i$. Equivalently these are the elements whose lift in $\MM$ may be chosen to be depth $1$. We write $\MMo$ for the subalgebra of $\MM$ generated by depth $1$ motivic \mzv{}s. As an algebra, $\MMo$ is freely generated by $\{\zmm(k):k\geq 3\text{ odd}\}$.

\subsection{The group}

The motivic Galois group for mixed Tate motives decomposes as a semi-direct product
\[
\Gamma:=\G_m\ltimes G_{\cU},
\]
where $G_{\cU}$ is a free pro-unipoten with one generator $\delta_k$ of degree $-k$ for each odd $k\geq 3$. To describe an action of $\Gamma$ on a ring $R$ is equivalent to the following:
\begin{itemize}
\item an action of $\G_m$ on $R$, or equivalently, a $\Z$-grading on $R$, and
\item a collection of locally nilpotent derivations $\{\delta_k:k\geq 3\text{ odd}\}$, such that $\delta_k$ reduces degrees by $k$.
\end{itemize}
The action is constructed \cite{Bro12a}. The generators $\delta_k$ are not canonical, but their images in the abelianization of $\Gamma$ are canonical. The action of $\Gamma$ on $\MMo$ factors through the abelianization, so we will not need to make any choices.

We describe the action of $\Gamma$ on $\MMo((T))$. The grading is by weight, so that $\zmm(k)T^n$ has degree $k$. The derivations $\delta_k$ are $\Q((T))$-linear, and satisfy 
\[
\delta_k\zmm(m)=\begin{cases}1\text{ if $k=m$},\\0\text{ otherwise.}\end{cases}
\]
In other words, with respect to the algebra basis $\{\delta_k:k\geq 3\text{ odd}\}$, $\delta_k$ acts by partial differentiation with respect to $\zmm(k)$.
For $r\in\Q^\times=\G_m(\Q)$ and $x\in\MM((T))$, we write $r\circ x$ for the image of $x$ under the action of $r$. The grading is concentrated in non-negative degrees, so projection onto degree $0$ part is a ring homomorphism. We denote the degree $0$ part of an  element $x\in\MM((T))$ by $x_0$. The $\G_m$ action extends to an action of the multiplicative monoid $\G_m\cup\{0\}$, and $x_0$ is the image of $x$ under $0\in\G_m\cup\{0\}$. Thus if we have an algebraic formula for $r\circ x$, we obtain $x_0$ by setting $r=0$.

\subsection{Power sums}
First we consider the power sum multiple harmonic sums $\H(n)$.
\begin{proposition}
\label{propgmpo}
For positive integers $n$ and $r$, we have
\[
\per(r\circ\Hm(n))=r^nH_{rp}^{(p)}(n):=r^n \sum_{\substack{j=1\\p\nmid j}}^{rp}\frac{1}{j^n}.
\]
In addition we have $\Hm(n)_0=0$.
\end{proposition}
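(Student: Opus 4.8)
The plan is to push the $\Gamma$-action through $\per$ and then recognize the image combinatorially. First, the last sentence: specializing \eqref{motmhs} to depth one --- where only the $i=0$ and $i=1$ terms occur, each carrying a single motivic \mzv{}, so that $\Hm(n)$ is $\Q((T))$-linear in the $\zmm$'s --- and reducing modulo $\zmm(1)=\zmm(2j)=0$ in $\MMo$, one reads off
\[
\Hm(n)=(-1)^{n}\sum_{m\geq 1}\frac{(n)_{m}}{m!}\,\zmm(n+m)\,T^{m},
\]
the depth-one form of \eqref{eqjar}. Every monomial here has degree $n+m\geq 1$, so $0\circ\Hm(n)=0$, i.e.\ $\Hm(n)_{0}=0$. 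Since moreover $\zmm(n+m)T^{m}$ is homogeneous of degree $n+m$, the $\G_{m}$-action gives $r\circ\Hm(n)=r^{n}\,\sigma_{r}(\Hm(n))$, where $\sigma_{r}$ denotes the continuous $\MM$-algebra automorphism $T\mapsto rT$ of $\MM((T))$. So it suffices to show that $\sigma_{r}(\Hm(n))$ is a motivic lift of $H_{rp}^{(p)}(n)$.

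To that end I would first write down \emph{some} motivic lift of $H_{rp}^{(p)}(n)$ by hand. Writing each $j$ with $1\leq j\leq rp$, $p\nmid j$ uniquely as $j=ap+b$ with $0\leq a\leq r-1$, $1\leq b\leq p-1$, and expanding $(ap+b)^{-n}=\sum_{\ell\geq 0}\binom{-n}{\ell}(ap)^{\ell}b^{-n-\ell}$ ($p$-adically convergent as $v_{p}(ap/b)\geq 1$), summation over $b$ and then $a$ gives in $\Ai$
\[
H_{rp}^{(p)}(n)=\sum_{\ell\geq 0}\binom{-n}{\ell}\,S_{\ell}(r)\,p^{\ell}\,H_{p-1}(n+\ell),\qquad S_{\ell}(r):=\sum_{a=0}^{r-1}a^{\ell},
\]
with exponents $\ell\to\infty$. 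Since $\Hm(n+\ell)$ is a motivic lift of $H_{p-1}(n+\ell)=\H(n+\ell)$ (the remark after Definition \ref{defmotmhs}), applying $\per$ termwise shows that $\Lambda:=\sum_{\ell\geq 0}\binom{-n}{\ell}S_{\ell}(r)\,T^{\ell}\,\Hm(n+\ell)\in\MMo((T))$ satisfies $\per(\Lambda)=\bigl(H_{rp}^{(p)}(n)\bigr)$.

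The core step is then to prove that $\sigma_{r}(\Hm(n))=\Lambda$ already as elements of $\MMo((T))$. Both sides are $\Q((T))$-linear combinations of the free generators $\zmm(k)$, $k\geq 3$ odd, so it is enough to compare the coefficient of each $\zmm(n+m)T^{m}$. Substituting the displayed formula for $\Hm(n+\ell)$ into $\Lambda$, and using $\binom{-n}{\ell}=(-1)^{\ell}(n)_{\ell}/\ell!$ together with the Pochhammer identity $(n)_{\ell}(n+\ell)_{m-\ell}=(n)_{m}$, the coefficient of $\zmm(n+m)T^{m}$ in $\Lambda$ becomes $(-1)^{n}\frac{(n)_{m}}{m!}\sum_{\ell=0}^{m-1}\binom{m}{\ell}S_{\ell}(r)$; and $\sum_{\ell=0}^{m}\binom{m}{\ell}S_{\ell}(r)=\sum_{a=0}^{r-1}(1+a)^{m}=S_{m}(r)+r^{m}$, so $\sum_{\ell=0}^{m-1}\binom{m}{\ell}S_{\ell}(r)=r^{m}$, whence this coefficient is $(-1)^{n}\frac{(n)_{m}}{m!}r^{m}$ --- exactly the coefficient of $\zmm(n+m)T^{m}$ in $\sigma_{r}(\Hm(n))$. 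Therefore $r\circ\Hm(n)=r^{n}\Lambda$, and applying $\per$ yields $\per(r\circ\Hm(n))=r^{n}\per(\Lambda)=r^{n}\bigl(H_{rp}^{(p)}(n)\bigr)$, which is the assertion.

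The main obstacle is this last coefficient comparison --- seeing that the double sum defining $\Lambda$ collapses by means of $(n)_{\ell}(n+\ell)_{m-\ell}=(n)_{m}$ and the power-sum identity $\sum_{\ell<m}\binom{m}{\ell}S_{\ell}(r)=r^{m}$. The rest is routine: reading off the coefficients of $\Hm(n)$ from \eqref{motmhs} with consistent sign conventions, and checking the rearrangements converge in the relevant topologies (the $p$-adic one for the $j=ap+b$ expansion; the filtrations on $\MM((T))$ and $\Ai$ for the infinite sums, using $\Hm(w)\in\MM[[T]]$ and $H_{p-1}(w)\in\Z_{p}$ so that $T^{\ell}\Hm(w)$ and $p^{\ell}H_{p-1}(w)$ lie in $\fil^{\ell}$).
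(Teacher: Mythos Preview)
Your argument is correct, and it takes a genuinely different route from the paper. The paper simply writes out
\[
\per(r\circ\Hm(n))=(-1)^n\sum_{k\geq 1}\binom{n+k-1}{n-1}r^{n+k}p^{k}\zeta_p(n+k)
\]
and then invokes \cite{Was98}, Theorem~1, to identify this $p$-adic series with $r^{n}H_{rp}^{(p)}(n)$; the work is outsourced to Washington's formula. You instead stay in $\MM((T))$: you build an explicit motivic lift $\Lambda$ of $H_{rp}^{(p)}(n)$ from the elementary expansion $j=ap+b$, and then prove the \emph{motivic} identity $r\circ\Hm(n)=r^{n}\Lambda$ by matching coefficients, using the Pochhammer splitting $(n)_{\ell}(n+\ell)_{m-\ell}=(n)_{m}$ and the power-sum identity $\sum_{\ell<m}\binom{m}{\ell}S_{\ell}(r)=r^{m}$. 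Only at the very end do you apply $\per$. This buys you a self-contained proof (no appeal to \cite{Was98}) and a slightly stronger statement---an equality upstairs in $\MM((T))$ rather than just in $\Ai$---at the cost of a short combinatorial computation. One small remark on presentation: when you say ``compare the coefficient of each $\zmm(n+m)T^{m}$,'' what makes this legitimate is that both sides are already written as $\sum_{m\geq 1}c_{m}\,\zmm(n+m)\,T^{m}$ with $c_{m}\in\Q$, so you are really just comparing $T$-coefficients; the fact that some $\zmm(n+m)$ vanish in $\MM$ is irrelevant.
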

\begin{proof}
The motivic power sum is
\begin{equation}
\label{HtoZ}
H_{p-1}^\fa(n)=(-1)^n \sum_{k\geq 1} {n+k-1\choose n-1}\zmm(n+k)T^k.
\end{equation}
We act by $r$ and apply $\per$ to obtain
\[
\per(r\circ H^\fa_{p-1}(n))=(-1)^n \sum_{k\geq 1} {n+k-1\choose n-1}r^{n+k}p^k\zeta_p(n+k).
\]
It follows from \cite{Was98} (Theorem 1) that the right hand side above is
\[
r^n \sum_{\substack{j=1\\p\nmid j}}^{rp}\frac{1}{j^n}.
\]

The second part of the claim is immediate from \eqref{HtoZ}.
\end{proof}

Next we compute the derivations.
\begin{proposition}
\label{propdpo}
For positive integer $n$, $k$, with $k\geq 3$ odd, we have
\[
\delta_k \Hm(n)=\begin{cases}
(-1)^n T^{k-n}{k-1\choose n-1}&:\gap k> n,\\
0&:\gap k \leq n.\end{cases}
\]
\end{proposition}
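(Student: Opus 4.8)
The plan is to compute $\delta_k$ applied to the explicit expression \eqref{HtoZ} for the motivic power sum. Recall from the proof of Proposition \ref{propgmpo} that
\[
\Hm(n)=(-1)^n\sum_{j\geq 1}{n+j-1\choose n-1}\zmm(n+j)T^j.
\]
Since $\delta_k$ is $\Q((T))$-linear, it commutes with multiplication by $T^j$ and acts on the series term by term; since it acts on $\MMo$ by $\partial/\partial\zmm(k)$ (with respect to the free polynomial generators $\{\zmm(m):m\geq 3\text{ odd}\}$), we have $\delta_k\zmm(n+j)=1$ if $n+j=k$ and $0$ otherwise. So only the single term with $j=k-n$ survives, giving
\[
\delta_k\Hm(n)=(-1)^n{k-1\choose n-1}T^{k-n},
\]
provided that index $j=k-n$ is actually present in the sum, i.e.\ that $j\geq 1$, which is exactly the condition $k>n$. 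If $k\le n$ then no term has $n+j=k$ with $j\ge 1$, so $\delta_k\Hm(n)=0$. This matches the claimed formula.

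The one genuine subtlety — and the step I expect to be the main obstacle — is the justification that $\delta_k$ may be applied termwise to the infinite series \eqref{HtoZ}, i.e.\ that $\delta_k$ is continuous for the $T$-adic topology on $\MMo((T))$ (or, more carefully, that the derivation on $\MMo$ extended $\Q((T))$-linearly is continuous). This is immediate once one notes that $\delta_k$ preserves the filtration $\fil^n\MMo((T))=T^n\MMo[[T]]$ — indeed it fixes $T$ and maps $\MMo$ into $\MMo$ — so it is filtered, hence continuous, and therefore commutes with the convergent sum. One should also double-check that $\zmm(k)$ is one of the polynomial generators when $k\ge 3$ is odd (so that $\partial/\partial\zmm(k)$ makes sense): this is the stated fact that $\MMo$ is freely generated by $\{\zmm(m):m\ge 3\text{ odd}\}$, and for $n+j=k$ odd the weight $k=n+j\ge n+1\ge 2$; if $n+j$ is even then $\zmm(n+j)$ lies in the subalgebra generated by the odd generators and $\delta_k$ kills it by the Leibniz rule together with $\delta_k\zmm(m)=0$ for $m$ odd, $m\ne k$ — but in fact this case does not arise in the surviving term precisely because $k$ is odd, so the term with $n+j=k$ automatically has $\zmm(k)$ a generator. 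Writing this out carefully is a line or two; the computation itself is then essentially immediate.

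Concretely, I would structure the proof as: (i) recall \eqref{HtoZ}; (ii) observe $\delta_k$ is filtered hence continuous, so acts termwise; (iii) use $\delta_k\zmm(m)=\delta_{km}$ (Kronecker delta, valid since $m=n+j$ is either the generator $\zmm(k)$ or, being distinct from $k$ or of even weight, is annihilated) to kill all terms except $j=k-n$; (iv) read off the two cases according to whether $k-n\ge 1$. No step requires more than routine verification.
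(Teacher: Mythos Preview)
Your proof is correct and follows exactly the approach the paper intends: the paper's own proof is the single line ``Immediate from \eqref{HtoZ},'' and you have simply spelled out what ``immediate'' means by applying $\delta_k$ termwise to that series. Your additional remarks on continuity and on the even-weight terms are more care than the paper takes, but they are accurate and do no harm.
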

\begin{proof}
Immediate from \eqref{HtoZ}.
\end{proof}

\subsection{Elementary symmetric sums}
Here we consider the elementary symmetric multiple harmonic sums
\[
\H(1^n):=\H(\underbrace{1,\ldots,1}_n).
\]

\begin{remark}
\label{remnew}
The association $\H(\bs)\mapsto\Hm(\bs)$ is a homomorphism for the series shuffle product (\cite{Jar16}, Theorem 1). For example, we have $\H(1)\H(2)=\H(1,2)+\H(2,1)+\H(3)$, and the corresponding identity $\Hm(1)\Hm(2)=\Hm(1,2)+\Hm(2,1)+\Hm(3)$ holds for the motivic lifts. Newton's formula relates the elementary symmetric and power sum symmetric functions, and these relations also hold for the elementary symmetric and power sum motivic multiple harmonic sums.
\end{remark}

\begin{proposition}
\label{propgmel}
For positive integers $n$, $r$, we have
\[
\per(r\circ\Hm(1^n))=r^n H_{rp}^{(p)}(1^n):=r^n\hspace{-5mm}\sum_{\substack{rp\geq m_1>\ldots>m_n\geq 1\\p\nmid m_1\ldots m_n}}\frac{1}{m_1\ldots m_n}.
\]
In addition we have $\Hm(1^n)_0=0$.
\end{proposition}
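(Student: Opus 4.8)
I would reduce the statement to the power-sum case already settled in Proposition~\ref{propgmpo}, using Newton's identities relating elementary symmetric functions to power sums. Since $\H(\bs)\mapsto\Hm(\bs)$ intertwines the series shuffle product, Newton's relations hold for the motivic multiple harmonic sums (Remark~\ref{remnew}), giving
\[
n\,\Hm(1^n)=\sum_{i=1}^{n}(-1)^{i-1}\Hm(1^{n-i})\,\Hm(i)
\]
in $\MM((T))$, with the convention $\Hm(1^0):=1$. Because the action of $r\in\G_m(\Q)$ on $\MM((T))$, the period map $\per$, and the projection onto degree $0$ are all ring homomorphisms, this identity can be transported through any composite of them.

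\textbf{Steps.} First I would apply $\per\circ(r\circ-)$ to the displayed identity, obtaining
\[
n\,\per\bigl(r\circ\Hm(1^n)\bigr)=\sum_{i=1}^{n}(-1)^{i-1}\per\bigl(r\circ\Hm(1^{n-i})\bigr)\,\per\bigl(r\circ\Hm(i)\bigr).
\]
By Proposition~\ref{propgmpo}, $\per(r\circ\Hm(i))=r^{i}H_{rp}^{(p)}(i)$. Proceeding by induction on $n$ — the base case $n=1$ being Proposition~\ref{propgmpo} directly, and the trivial case $n=0$ being $\per(r\circ 1)=1=r^{0}H_{rp}^{(p)}(1^{0})$ — I would substitute the inductive hypothesis $\per(r\circ\Hm(1^{n-i}))=r^{n-i}H_{rp}^{(p)}(1^{n-i})$ for $1\le i\le n$ to get
\[
n\,\per\bigl(r\circ\Hm(1^n)\bigr)=r^{n}\sum_{i=1}^{n}(-1)^{i-1}H_{rp}^{(p)}(1^{n-i})\,H_{rp}^{(p)}(i).
\]
Next I would observe that for each prime $p$ the rational numbers $H_{rp}^{(p)}(1^{m})$ and $H_{rp}^{(p)}(m)$ are exactly the $m$-th elementary symmetric function and the $m$-th power sum of the finite set $\{1/j:1\le j\le rp,\ p\nmid j\}$, so the classical Newton identity gives $\sum_{i=1}^{n}(-1)^{i-1}H_{rp}^{(p)}(1^{n-i})H_{rp}^{(p)}(i)=n\,H_{rp}^{(p)}(1^n)$ in $\Q$, hence in $\Ai$. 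Dividing by $n$ yields $\per(r\circ\Hm(1^n))=r^{n}H_{rp}^{(p)}(1^n)$. For the second assertion, expanding \eqref{motmhs} with $\bs=1^n$ shows that every monomial $\zmm(\cdot)\,\zmm(\cdot)\,T^{k}$ occurring has weight $n+k\ge n\ge 1$, so $\Hm(1^n)$ lies in strictly positive degrees and $\Hm(1^n)_0=0$; alternatively, apply the degree-$0$ projection to Newton's identity and use $\Hm(i)_0=0$ from Proposition~\ref{propgmpo}.

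\textbf{Main obstacle.} I do not expect a serious obstacle: the proposition is a formal consequence of Proposition~\ref{propgmpo} and the compatibility of $\Hm$ with the stuffle product (hence with Newton's identities) recorded in Remark~\ref{remnew}. The only points needing care are that Newton's identity really does transfer from multiple harmonic sums to their motivic lifts — which is the content cited in Remark~\ref{remnew}, ultimately because Newton's identity, after expanding the products via the series shuffle, is a $\Z$-linear relation among the $\H(\bt)$ valid for every truncation and thus for the $\Hm(\bt)$ — and that the induction be organized so that at each stage the right-hand side pairs only strictly lower elementary symmetric sums with power sums, to which Proposition~\ref{propgmpo} and the inductive hypothesis apply.
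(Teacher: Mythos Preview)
Your proposal is correct and follows essentially the same approach as the paper: reduce to the power-sum case of Proposition~\ref{propgmpo} via Newton's identities, using the stuffle compatibility recorded in Remark~\ref{remnew}. The paper's proof is terser (it does not write out the induction explicitly), but the argument is the same.
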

\begin{proof}
Newton's formula for symmetric functions shows we can write $\H(1^n)$ as a polynomial in the depth $1$ sums, independent of $p$. The result now follows from Proposition \ref{propgmpo} and Remark \ref{remnew}. The second claim follows similarly.
\end{proof}

We similarly get a ``finite'' formula for the derivations.
\begin{proposition}
\label{propes}
For positive integer $n$, $k$, with $k\geq 3$ odd, we have
\[
\delta_k\Hm(1^n)= -\hspace{-5mm}\sum_{i=\max(1,k-n)}^{k-1} \frac{1}{k}{k\choose i}T^i \Hm(1^{n-k+i}).
\]
\end{proposition}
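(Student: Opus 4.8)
The plan is to deduce the formula from the known action of $\delta_k$ on the power-sum motivic multiple harmonic sums $\Hm(m)$ (Proposition \ref{propdpo}) by packaging Newton's identities as a single generating-function relation. Introduce the two series
\[
\mathcal{E}(t):=\sum_{n\ge 0}\Hm(1^n)\,t^n,\qquad P(t):=\sum_{m\ge 1}\frac{(-1)^{m-1}}{m}\Hm(m)\,t^m
\]
in $\MM((T))[[t]]$. By Remark \ref{remnew}, sending $\H(m)\mapsto\Hm(m)$ extends to a ring homomorphism from the ring of symmetric functions over $\Q$ (with power sum $p_m\mapsto\Hm(m)$ and elementary symmetric function $e_n\mapsto\Hm(1^n)$) into $\MM((T))$; applying this homomorphism to the classical identity $\sum_{n\ge 0}e_nt^n=\exp\big(\sum_{m\ge 1}\tfrac{(-1)^{m-1}}{m}p_mt^m\big)$, valid in $\Q[p_1,p_2,\dots][[t]]$, yields $\mathcal{E}(t)=\exp(P(t))$ (this is legitimate because $P(t)$ has zero constant term).

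Next, extend $\delta_k$ coefficientwise to a $\Q((T))[[t]]$-linear derivation of $\MM((T))[[t]]$. Since $\delta_k$ is a derivation and $P(t)$ has zero constant term, $\delta_k\mathcal{E}(t)=\mathcal{E}(t)\,\delta_kP(t)$. The key step is to put $\delta_kP(t)$ in closed form: Proposition \ref{propdpo} gives $\delta_k\Hm(m)=(-1)^m\binom{k-1}{m-1}T^{k-m}$ for $1\le m<k$ and $0$ for $m\ge k$, so using $\tfrac1m\binom{k-1}{m-1}=\tfrac1k\binom{k}{m}$ one obtains
\[
\delta_kP(t)=-\frac1k\sum_{m=1}^{k-1}\binom{k}{m}T^{k-m}t^m=-\frac1k\big((T+t)^k-T^k-t^k\big),
\]
and hence $\delta_k\mathcal{E}(t)=-\tfrac1k\big((T+t)^k-T^k-t^k\big)\mathcal{E}(t)$.

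Finally, extract the coefficient of $t^n$. Writing $(T+t)^k-T^k-t^k=\sum_{i=1}^{k-1}\binom{k}{i}T^{k-i}t^i$ and multiplying by $\sum_{n'\ge 0}\Hm(1^{n'})t^{n'}$, the coefficient of $t^n$ on the right-hand side is $-\tfrac1k\sum_{i=1}^{\min(n,k-1)}\binom{k}{i}T^{k-i}\Hm(1^{n-i})$, so
\[
\delta_k\Hm(1^n)=-\frac1k\sum_{i=1}^{\min(n,k-1)}\binom{k}{i}T^{k-i}\Hm(1^{n-i}),
\]
and the substitution $i\mapsto k-i$ (with the constraint $n-i\ge 0$ producing the lower limit $\max(1,k-n)$) turns this into the stated identity. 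The computation is short; the only points deserving a sentence are the legitimacy of transferring the symmetric-function identity, which is exactly the content of Remark \ref{remnew}, and the reindexing of the sum at the end, so I do not expect a genuine obstacle. Alternatively one could bypass generating functions and induct on $n$ directly from Newton's recursion $n\Hm(1^n)=\sum_{i=1}^n(-1)^{i-1}\Hm(1^{n-i})\Hm(i)$, applying $\delta_k$ and Proposition \ref{propdpo} at each step; the generating-function version simply organizes the bookkeeping more cleanly.
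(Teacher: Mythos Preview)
Your argument is correct. The paper takes the alternative route you mention at the end: it inducts on $n$ via Newton's recursion $n\Hm(1^n)=\sum_{i=1}^n(-1)^{i-1}\Hm(i)\Hm(1^{n-i})$, applies $\delta_k$ with the Leibniz rule, splits into two sums (one handled by Proposition~\ref{propdpo}, the other by the inductive hypothesis), and then recombines. Your generating-function packaging via $\mathcal{E}(t)=\exp P(t)$ is a genuine streamlining: the closed form $\delta_kP(t)=-\tfrac{1}{k}\big((T+t)^k-T^k-t^k\big)$ replaces the paper's somewhat intricate reindexing and the verification that the two partial sums combine to $-\tfrac{1}{k}\binom{k}{j}$. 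What the paper's approach buys is that it stays entirely within finite sums and needs nothing beyond the single Newton relation already quoted; your version trades that for a one-line logarithmic-derivative computation but requires the exponential identity in $\MM((T))[[t]]$, which is unproblematic given Remark~\ref{remnew} and the vanishing constant term of $P(t)$.
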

\begin{proof}
The proof is induction on $n$. For $n=0$ this is true because both sides are $0$.

Suppose $n\geq 1$. Newton's formula for symmetric functions implies
\[
\Hm(1^n)=\frac{1}{n}\sum_{i=1}^n (-1)^{i-1}\Hm(i)\Hm(1^{n-i}).
\]
We compute
\begin{align*}
\delta_k \Hm(1^n) &= \frac{1}{n}\sum_{i=1}^n (-1)^{i-1}\delta_k\big[\Hm(i)\Hm(1^{n-i})\big]\\
&=\frac{1}{n}\sum_{i=1}^n (-1)^{i-1}\delta_k[\Hm(i)]\cdot \Hm(1^{n-i})\\
&\gap+\frac{1}{n}\sum_{i=1}^n (-1)^{i-1}\Hm(i)\delta_k\big[\Hm(1^{n-i})\big]\\
\end{align*}
The left-hand sum is
\begin{align*}
&-\frac{1}{n}\sum_{i=1}^{\min(n,k-1)}{k-1\choose i-1}T^{k-i}\Hm(1^{n-i})\\
&\hspace{15mm}=-\frac{1}{n}\sum_{j=\max(1,k-n)}^{k-1}{k-1\choose j}T^j\Hm(1^{n-k+j})
\end{align*}

The right-hand sum is
\begin{gather*}
\hspace{-20mm} -\frac{1}{nk}\sum_{i=1}^n(-1)^{i-1}\Hm(i)
\sum_{j=\max(1,k-n+i)}^{k-1} {k\choose j}p^j \Hm(1^{n-i-k+j})\\
\hspace{10mm}=-\frac{1}{nk}\sum_{j=\max(1,k-n+1)}^{k-1}{k\choose j}T^j \sum_{i=1}^{n-k-j} (-1)^{i-1}\Hm(i)\Hm(1^{n-k+j-i})\\
=-\frac{1}{nk}\sum_{j=\max(1,k-n+1)}^{k-1} {k\choose j}p^j(n-k+j)\Hm(1^{n-k+j}).
\end{gather*}
The result now follows by adding these two sums together and observing that the coefficient of $T^j\Hm(1^{n-k+n})$ is
\[
-\frac{1}{n}{k-1\choose j} -\frac{1}{nk}{k\choose j}(n-k+j) =\frac{-1}{k}{k\choose j}
\]
\end{proof}

\subsection{Binomial coefficients}
Next we compute the Galois action on the binomial coefficients ${ap\choose bp}$, which are in MHS algebra.

First, define
\[
c_n:={np\choose p}=n\sum_{j\geq 0}(n-1)^jp^j\H(1^j),
\]
with motivic lift
\begin{equation}
\label{cc}
c_n^{\fa}:=n\sum_{j\geq 0}(n-1)^jT^j\Hm(1^j)\in\MM[[T]].
\end{equation}
\begin{proposition}
\label{propgmc}
Fix a positive integer $r\in\G_m(\Q)$ and a positive integer $n$. Then
\[
\per(r\circ c_n^\fa)=\frac{n}{{rn\choose r}}{rnp\choose rp}.	
\]
In addition, we have $(c_n^\fa)_0=n$.
\end{proposition}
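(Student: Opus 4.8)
The plan is to reduce the statement to Proposition~\ref{propgmel} together with an elementary factorial computation. Recall from~\eqref{cc} that $c_n^{\fa}=n\sum_{j\geq 0}(n-1)^jT^j\Hm(1^j)\in\MM[[T]]$, and that the $\G_m$-action fixes $T$ (which has degree $0$), so that $r\circ c_n^{\fa}=n\sum_{j\geq 0}(n-1)^jT^j\lp r\circ\Hm(1^j)\rp$; this series converges because $T^j\Hm(1^j)\in\fil^j\MM((T))$. Since $\per$ is a continuous ring homomorphism with $\per(T)=(p)$, applying $\per$ term by term and invoking Proposition~\ref{propgmel} (with its ``$n$'' taken to be $j$) gives
\[
\per(r\circ c_n^{\fa})=n\sum_{j\geq 0}(n-1)^jr^jp^j\,H_{rp}^{(p)}(1^j)\in\Ai .
\]
For each fixed prime $p$ the sum on the right is finite, since $H_{rp}^{(p)}(1^j)=0$ once $j>rp-r$; moreover the $j$th term lies in $\fil^j\Ai$, so the tails of the series vanish in the limit and nothing is lost in passing between this $\Ai$-convergent series and the family of its $p$-adic values.

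Next I would put the $p$-component into closed form. By definition $H_{rp}^{(p)}(1^j)$ is the $j$th elementary symmetric function of the finite set $\{1/m : 1\leq m\leq rp,\ p\nmid m\}$, so $\sum_{j\geq 0}x^jH_{rp}^{(p)}(1^j)=\prod_{1\leq m\leq rp,\,p\nmid m}(1+x/m)$; taking $x=(n-1)rp$ yields
\[
\per(r\circ c_n^{\fa})=\lp\ n\prod_{\substack{1\leq m\leq rp\\ p\nmid m}}\frac{m+(n-1)rp}{m}\ \rp\in\Ai .
\]
Finally I would evaluate this product by isolating the factors with $p\mid m$, namely $m=ip$ for $i=1,\ldots,r$. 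The unrestricted product telescopes: $\prod_{m=1}^{rp}(m+(n-1)rp)=\frac{(nrp)!}{((n-1)rp)!}$, hence $\prod_{m=1}^{rp}\frac{m+(n-1)rp}{m}={nrp\choose rp}$; and the discarded factors give $\prod_{i=1}^{r}\frac{ip+(n-1)rp}{ip}=\prod_{i=1}^{r}\frac{i+(n-1)r}{i}=\frac{(nr)!}{r!\,((n-1)r)!}={nr\choose r}$. Dividing, $\per(r\circ c_n^{\fa})=\frac{n}{{rn\choose r}}{rnp\choose rp}$, as claimed. The assertion $(c_n^{\fa})_0=n$ is then immediate: projection onto degree $0$ is a ring homomorphism fixing $T$, and $\Hm(1^j)_0=0$ for $j\geq1$ by Proposition~\ref{propgmel} while $\Hm(1^0)=1$, so only the $j=0$ term of~\eqref{cc} survives.

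I do not anticipate a real obstacle here: the substantive input is Proposition~\ref{propgmel}, which is already available, and the rest is bookkeeping. The two points deserving a little care are (i) justifying the term-by-term application of the $\G_m$-action and of $\per$ to the infinite series defining $c_n^{\fa}$, and the identification of the resulting limit in $\Ai$ with the $p$-adic products above; and (ii) the factorial manipulation itself --- in particular correctly tracking which $r$ factors are removed because $p\mid m$ and recognizing their product as ${rn\choose r}$. Both are routine.
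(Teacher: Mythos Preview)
Your proof is correct and follows essentially the same route as the paper: apply Proposition~\ref{propgmel} term by term to~\eqref{cc}, rewrite the resulting sum as the product $n\prod_{p\nmid m\leq rp}\bigl(1+(n-1)rp/m\bigr)$, and evaluate it as $\frac{n}{{rn\choose r}}{rnp\choose rp}$. The paper's version is terser---it states the three displayed lines without spelling out the factorial manipulation or the convergence justifications you supply---but the argument is the same.
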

\begin{proof}
We compute
\begin{align*}
\per(r\circ c_n^\fa)&=n\sum_{j\geq 0}(n-1)^j(rp)^j H_{rp}^{(p)} (1^j)\\
&=n\prod_{\substack{i=1\\p\nmid i}}^{rp}\lp(1+\frac{(n-1)rp}{i}\rp\\
&=\frac{n}{{rn\choose r}}{rnp\choose rp}.	
\end{align*}
The second part of the claim follows from the expression for $c_n$ in terms of the elementary symmetric sums $\H(1^k)$.
\end{proof}
\begin{proposition}
For a positive integer $r\in\G_m(\Q)$ and $a\geq b\geq 0$, we have
\[
\per\lp r\circ{ap\choose bp}^\fa\rp = \frac{{a\choose b}}{{ra\choose rb}}{rap\choose rbp}.
\]
In addition, we have ${ap\choose bp}^\fa_0={a\choose b}$.
\end{proposition}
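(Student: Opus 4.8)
The plan is to reduce the statement to Proposition~\ref{propgmc} by rewriting ${ap\choose bp}^\fa$ as a ratio of products of the elementary factors $c_m^\fa$ of \eqref{cc}, and then to finish with a telescoping computation in ordinary factorials. The first step is the observation that, comparing \eqref{cc} with the factors occurring in \eqref{eqmb},
\[
\sum_{i\geq 0}n^i\Hm(1^i)T^i=\frac{1}{n+1}\,c_{n+1}^\fa
\]
for every integer $n\geq 0$ (both sides equal $1$ when $n=0$). Substituting this into \eqref{eqmb} and pulling the scalars $1/(n+1)$ out of the two products — the numerator produces $(a-b)!/a!$ and the denominator produces $1/b!$, whose ratio $\tfrac{(a-b)!\,b!}{a!}=1/{a\choose b}$ cancels the explicit prefactor ${a\choose b}$ — one arrives at the clean identity
\[
{ap\choose bp}^\fa=\frac{c_{a-b+1}^\fa\,c_{a-b+2}^\fa\cdots c_a^\fa}{c_1^\fa\,c_2^\fa\cdots c_b^\fa}
\]
in $\MM((T))$, which is legitimate because each $c_m^\fa$ has constant term $m\neq 0$ and is therefore a unit; the degenerate cases $b=0$ and $a=b$ give empty products equal to $1$ on each side.

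Since acting by $r$ is a ring automorphism of $\MM((T))$ and $\per$ is a ring homomorphism, both carrying units to units, I would then act by $r$ on the displayed identity and apply $\per$, invoking Proposition~\ref{propgmc}, which supplies $\per(r\circ c_m^\fa)=\tfrac{m}{{rm\choose r}}{rmp\choose rp}$. What remains is an identity of ordinary rational numbers, valid for every prime $p$ and hence in $\Ai$. Using the telescoping identities $\prod_{m=1}^{N}{rmp\choose rp}=(Nrp)!/((rp)!)^N$ and $\prod_{m=1}^{N}{rm\choose r}=(rN)!/(r!)^N$, one obtains $P(N):=\prod_{m=1}^{N}\per(r\circ c_m^\fa)=\tfrac{N!\,(r!)^N\,(Nrp)!}{(rN)!\,((rp)!)^N}$; writing the required ratio as $\tfrac{P(a)}{P(a-b)\,P(b)}$ and cancelling the powers of $(rp)!$ and of $r!$ leaves precisely $\tfrac{{a\choose b}}{{ra\choose rb}}{rap\choose rbp}$. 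For the degree-$0$ assertion I would instead apply the ring homomorphism $x\mapsto x_0$ to the same displayed identity, using $(c_m^\fa)_0=m$ from Proposition~\ref{propgmc} (equivalently, $\Hm(1^i)_0=0$ for $i\geq 1$, as in Proposition~\ref{propgmel}), to get ${ap\choose bp}^\fa_0=\tfrac{\prod_{m=a-b+1}^{a}m}{\prod_{m=1}^{b}m}={a\choose b}$.

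I do not expect a genuine obstacle here: once Proposition~\ref{propgmc} is available, everything is elementary, and the only points demanding a little care are the factorial bookkeeping in the final cancellation and the degenerate index ranges. The sole substantive ingredient, Proposition~\ref{propgmc}, has already been proved.
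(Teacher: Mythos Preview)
Your proposal is correct and follows essentially the same approach as the paper: both derive the identity ${ap\choose bp}^\fa=\dfrac{c_{a-b+1}^\fa\cdots c_a^\fa}{c_1^\fa\cdots c_b^\fa}$, then apply Proposition~\ref{propgmc} factorwise and simplify, with the degree-$0$ statement obtained from $(c_m^\fa)_0=m$. You supply more detail than the paper does---in particular, you justify the key identity from \eqref{eqmb} and \eqref{cc} and carry out the factorial telescoping explicitly via $P(N)$---whereas the paper simply asserts the identity and compresses the simplification into one line.
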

\begin{proof}
First we observe that
\[
{ap\choose bp}^\fa=\frac{c^\fa_a \cdots c^\fa_{a-b+1}}{c^\fa_b\cdots c^\fa_1}.
\]
It follows that
\begin{align*}
\per\lp r\circ {ap\choose bp}^\fa\rp&=\frac{\prod_{n=a-b+1}^a n{rnp\choose rp}/{rn\choose r}}{\prod_{n=1}^b n {rnp\choose rp}/{rn\choose r}}\\
&=\frac{{a\choose b}}{{ra\choose rb}}{rap\choose rbp}.
\end{align*}
The second part of the claim follows by setting $r=0$.
\end{proof}

We can also compute how the derivations act on binomial coefficients.
\begin{proposition}
\label{propdbin}
For positive integers $a$, $b$, and $k$, with $k\geq 3$ odd, we have
\[
\delta_k{ap\choose bp}^\fa=-\frac{T^k}{k}\big[a^k-b^k-(a-b)^k\big]{ap\choose bp}^\fa.
\]
\end{proposition}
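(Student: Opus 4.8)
The plan is to reduce the general formula to the special case $c_n^\fa = {np\choose p}^\fa$ and then bootstrap using the factorization
\[
{ap\choose bp}^\fa = \frac{c_a^\fa\cdots c_{a-b+1}^\fa}{c_b^\fa\cdots c_1^\fa}
\]
recorded in the proof of the previous proposition. Each $c_n^\fa$ is a unit of $\MM[[T]]$ (its constant term is $n\in\Q^\times$), so since $\delta_k$ is a $\Q$-algebra derivation of $\MM((T))$ the logarithmic derivative $x\mapsto\delta_k x/x$ is additive on units and
\[
\frac{\delta_k{ap\choose bp}^\fa}{{ap\choose bp}^\fa} = \sum_{n=a-b+1}^{a}\frac{\delta_k c_n^\fa}{c_n^\fa} - \sum_{n=1}^{b}\frac{\delta_k c_n^\fa}{c_n^\fa}.
\]
It therefore suffices to prove the case $(a,b)=(n,1)$, i.e.\ $\delta_k c_n^\fa = -\frac{T^k}{k}\bigl(n^k-(n-1)^k-1\bigr)c_n^\fa$.

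For that base case I would differentiate the defining series \eqref{cc} term by term (legitimate since $\delta_k$ is $\Q((T))$-linear and continuous for the $T$-adic topology), obtaining $\delta_k c_n^\fa = n\sum_{j\geq 0}(n-1)^jT^j\,\delta_k\Hm(1^j)$, and then insert the formula of Proposition \ref{propes} for $\delta_k\Hm(1^j)$. This gives a double sum over $j$ and the index $i$ of Proposition \ref{propes}; substituting $m=j-k+i$ and interchanging the order of summation, the two-sided constraint $\max(1,k-j)\le i\le k-1$ collapses to the clean range $1\le i\le k-1$, $m\ge 0$ (the cutoff $\max(1,k-j)$ being exactly what keeps the multiplicity $j-k+i$ non-negative). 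The inner sum over $m$ then reassembles $c_n^\fa/n$, leaving
\[
\delta_k c_n^\fa = -\frac{T^k}{k}\left(\sum_{i=1}^{k-1}{k\choose i}(n-1)^{k-i}\right)c_n^\fa,
\]
and the binomial theorem for $n^k=\bigl((n-1)+1\bigr)^k$ identifies the parenthesized sum with $n^k-(n-1)^k-1$.

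Finally, substituting this into the logarithmic-derivative identity, the total coefficient is $\sum_{n=a-b+1}^{a}\bigl(n^k-(n-1)^k-1\bigr)-\sum_{n=1}^{b}\bigl(n^k-(n-1)^k-1\bigr)$; the differences $n^k-(n-1)^k$ telescope to $a^k-(a-b)^k$ and to $b^k$, the constant contributions $-b$ from each sum cancel, and what remains is $a^k-b^k-(a-b)^k$. Multiplying through by ${ap\choose bp}^\fa$ yields the claim (and the degenerate case $a=b$, where both sides vanish, is consistent). I expect the only genuinely fiddly step to be the reindexing in the base case: one has to keep careful track of the cutoff $\max(1,k-j)$ in Proposition \ref{propes} and verify that the change of variables turns it into the range $1\le i\le k-1$ with no hidden boundary terms. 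Everything after that is the Leibniz rule, one telescoping sum, and one application of the binomial theorem.
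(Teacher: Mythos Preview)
Your proposal is correct and follows essentially the same route as the paper: first establish $\delta_k c_n^\fa = -\frac{T^k}{k}\bigl(n^k-(n-1)^k-1\bigr)c_n^\fa$ by inserting Proposition~\ref{propes} into the series \eqref{cc} and reindexing, then pass to ${ap\choose bp}^\fa$ via the logarithmic derivative and telescope. Your account of the reindexing (keeping $i$ and introducing $m=j-k+i$) is in fact cleaner than the paper's, which carries out the equivalent substitution but with a few $p$/$T$ typos along the way.
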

\begin{proof}

Proposition \ref{propes} and \eqref{cc} imply
\begin{align*}
\delta_k c_n^\fa &= -\frac{n}{k}\sum_{j\geq 0} (n-1)^j T^j\sum_{i=\max(1,k-j)}^{k-1} {k\choose i}p^i \Hm(1^{j-k+i})\\
&=-\frac{n}{k}\sum_{m\geq 0} \Hm(1^m)\sum_{j=m+1}^{m+k-1}{k\choose j-m} (n-1)^jT^{k+m}\\
&=-\frac{np^k}{k}\sum_{m\geq 0} (n-1)^m\Hm(1^m)\sum_{j=1}^{k-1}{k\choose j} (n-1)^j \\
&=-\frac{T^k}{k}\big[n^k-(n-1)^k-1\big]c_n.
\end{align*}
Finally, we have
\begin{align*}
\delta_k{ap\choose bp}^\fa&={ap\choose bp}^\fa\cdot\left(\sum_{n=a-b+1}^a\frac{\delta_k c_n^\fa}{c_n^\fa}-\sum_{n=1}^b\frac{\delta_k c_n^\fa}{c_n^\fa}\right)\\
&=-\frac{T^k}{k}\big[a^k-b^k-(a-b)^k\big]{ap\choose bp}^\fa.
\end{align*}
\end{proof}

\subsection{Products of factorials}
For $b$ a positive integer, we do not expect $(bp)!$ is in the MHS algebra. However, if $b_1,\ldots,b_k$ are positive integers and $n_1,\ldots,n_k$ are arbitrary integers satisfying $\sum n_ib_i=0$, then the product
\begin{equation}
\label{prodfac}
\alpha:=\prod_{i=1}^k (b_ip)!^{n_i}
\end{equation}
is in the MHS algebra: we can divide the $i$-th term by $p!^{n_ib_i}$ to express \eqref{prodfac} as
\begin{equation}
\label{pfcs}
\prod_{i=1}^k (c_1c_2\ldots c_{b_i})^{n_i}.
\end{equation}
We get a motivic lift $\alpha^\fa$ by replacing each term $c_j$ with $c^\fa_j$.

\begin{proposition}
For $r$ a positive integer, we have
\[
\per(r\circ\alpha^\fa)=\prod_{i=1}^k \lp\frac{b_i!r!^{b_i}}{(rb_i)!} (rb_ip)!\rp^{n_i}.
\]
Additionally we have $\alpha^{\fa}_0=\prod_i b_i!^{n_i}$.
\end{proposition}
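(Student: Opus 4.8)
The plan is to reduce the computation to Proposition~\ref{propgmc} by multiplicativity and then telescope. By construction $\alpha^\fa=\prod_{i=1}^k\big(c_1^\fa c_2^\fa\cdots c_{b_i}^\fa\big)^{n_i}$ in $\MM((T))$; each $c_j^\fa$ has constant term $j\in\Q^\times$ and hence is a unit in $\MM[[T]]$, so this makes sense even though some $n_i$ may be negative. Since the $\Gamma$-action on $\MM((T))$ is by ring automorphisms and $\per$ is a ring homomorphism,
\[
\per(r\circ\alpha^\fa)=\prod_{i=1}^k\prod_{j=1}^{b_i}\per(r\circ c_j^\fa)^{n_i},
\]
and Proposition~\ref{propgmc} gives $\per(r\circ c_j^\fa)=\frac{j}{\binom{rj}{r}}\binom{rjp}{rp}$ for each $j$.

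Next I would simplify $\prod_{j=1}^{b_i}\per(r\circ c_j^\fa)$ by three elementary telescopings, which are genuine identities of nonzero rationals for each $p$ and so are valid in $\Ai$: first $\prod_{j=1}^{b_i}j=b_i!$; second, writing $\binom{rj}{r}=\frac{(rj)!}{r!\,(r(j-1))!}$ gives $\prod_{j=1}^{b_i}\binom{rj}{r}=\frac{(rb_i)!}{r!^{b_i}}$ (using $0!=1$); third, writing $\binom{rjp}{rp}=\frac{(rjp)!}{(rp)!\,(r(j-1)p)!}$ gives $\prod_{j=1}^{b_i}\binom{rjp}{rp}=\frac{(rb_ip)!}{(rp)!^{b_i}}$ — equivalently this last product is the multinomial coefficient $\binom{rb_ip}{rp,\ldots,rp}$. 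Combining the three,
\[
\prod_{j=1}^{b_i}\per(r\circ c_j^\fa)=\frac{b_i!\,r!^{b_i}}{(rb_i)!}\cdot\frac{(rb_ip)!}{(rp)!^{b_i}}.
\]

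Raising to the $n_i$-th power and multiplying over $i$ then yields
\[
\per(r\circ\alpha^\fa)=\left(\prod_{i=1}^k\left(\frac{b_i!\,r!^{b_i}}{(rb_i)!}\,(rb_ip)!\right)^{n_i}\right)\cdot (rp)!^{-\sum_i n_ib_i},
\]
and the hypothesis $\sum_i n_ib_i=0$ kills the trailing factor $(rp)!^{0}=1$, giving exactly the asserted formula (this is also what makes the answer land in the MHS algebra, even though $(rp)!$ does not). For the last assertion, the degree-zero projection $(-)_0$ on $\MM((T))$ is a ring homomorphism (the grading being concentrated in non-negative degrees), so using $(c_j^\fa)_0=j$ from Proposition~\ref{propgmc},
\[
\alpha^\fa_0=\prod_{i=1}^k\prod_{j=1}^{b_i}\big((c_j^\fa)_0\big)^{n_i}=\prod_{i=1}^k\prod_{j=1}^{b_i}j^{\,n_i}=\prod_{i=1}^k b_i!^{\,n_i}.
\]

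I do not expect a real obstacle here: the proposition is essentially a formal consequence of Proposition~\ref{propgmc} together with the multiplicativity of $\per$ and of the $\Gamma$-action. The only steps needing a little care are the two factorial telescopings and, above all, invoking $\sum_i n_ib_i=0$ at the very end to cancel the $(rp)!^{b_i}$ denominators, so that the divergent (non-MHS) pieces disappear and the stated closed form emerges.
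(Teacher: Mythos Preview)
Your argument is correct and is exactly the approach the paper takes: it declares the result ``straightforward to derive from \eqref{pfcs} and Proposition~\ref{propgmc},'' and you have simply written out the telescoping and the use of $\sum_i n_i b_i=0$ that make this derivation explicit.
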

\begin{proof}
This is straightforward to derive from \eqref{pfcs} and Proposition \ref{propgmc}.
\end{proof}

\begin{proposition}
\label{propdf}
For $m\geq 3$ odd, we have
\[
\frac{\delta_m\alpha^{\fa}}{\alpha^{\fa}}=-\frac{T^m}{m} \sum_{i=1}^k n_i b_i^m.
\]
\end{proposition}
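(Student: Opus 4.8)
The plan is to reduce everything to the logarithmic derivative of a single factor $c_j^{\fa}$, exploiting that $\delta_m$ is a derivation. Writing $\alpha$ via \eqref{pfcs}, the motivic lift is $\alpha^{\fa}=\prod_{i=1}^k(c_1^{\fa}c_2^{\fa}\cdots c_{b_i}^{\fa})^{n_i}$, so the derivation property gives
\[
\frac{\delta_m\alpha^{\fa}}{\alpha^{\fa}}=\sum_{i=1}^k n_i\sum_{j=1}^{b_i}\frac{\delta_m c_j^{\fa}}{c_j^{\fa}}.
\]
The quantity $\delta_m c_j^{\fa}/c_j^{\fa}$ was already computed inside the proof of Proposition \ref{propdbin}: there one shows, starting from \eqref{cc} and Proposition \ref{propes}, that $\delta_k c_n^{\fa}=-\frac{T^k}{k}\bigl[n^k-(n-1)^k-1\bigr]c_n^{\fa}$. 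So I would simply quote that identity, yielding $\delta_m c_j^{\fa}/c_j^{\fa}=-\frac{T^m}{m}\bigl[j^m-(j-1)^m-1\bigr]$.

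Substituting and collecting, the inner sum telescopes: $\sum_{j=1}^{b_i}\bigl(j^m-(j-1)^m\bigr)=b_i^m$ while $\sum_{j=1}^{b_i}1=b_i$, so $\sum_{j=1}^{b_i}\bigl[j^m-(j-1)^m-1\bigr]=b_i^m-b_i$ and hence
\[
\frac{\delta_m\alpha^{\fa}}{\alpha^{\fa}}=-\frac{T^m}{m}\Bigl(\sum_{i=1}^k n_ib_i^m-\sum_{i=1}^k n_ib_i\Bigr).
\]
The final step is to invoke the standing hypothesis $\sum_{i=1}^k n_ib_i=0$ — the very condition that put $\alpha$ in the MHS algebra — to discard the second sum, leaving $-\frac{T^m}{m}\sum_{i=1}^k n_ib_i^m$, as claimed.

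I do not expect any genuine obstacle here: given Proposition \ref{propdbin} (really the identity for $\delta_k c_n^{\fa}$ that its proof establishes), this is a short formal manipulation. The only points needing care are keeping the derivation bookkeeping straight across the double product $\prod_i\prod_j$, and not forgetting to use the weight-zero constraint at the very end — without it an extra term $-\frac{T^m}{m}\sum_i n_ib_i$ would survive, so it is worth remarking explicitly that this is precisely the hypothesis that makes $\alpha$ lie in the MHS algebra.
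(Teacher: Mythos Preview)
Your proposal is correct and follows essentially the same route as the paper: decompose $\alpha^{\fa}$ via \eqref{pfcs}, apply the derivation property, quote the identity $\delta_m c_j^{\fa}/c_j^{\fa}=-\frac{T^m}{m}\bigl[j^m-(j-1)^m-1\bigr]$ from the proof of Proposition~\ref{propdbin}, telescope the inner sum to $b_i^m-b_i$, and invoke $\sum_i n_ib_i=0$ to drop the linear term.
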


\begin{proof}
We compute
\begin{align*}
\frac{\delta_m\alpha^{\fa}}{\alpha^{\fa}}&= \sum_{i=1}^k n_i\lp\frac{\delta_m c^\fa_1}{c^\fa_1}+\cdots+\frac{\delta_m c^\fa_{b_i}}{c^\fa_{b_i}}\rp\\
&=-\frac{T^m}{m} \sum_{i=1}^k n_i\lp b_i^m-b_i\rp\\
&=-\frac{T^m}{m} \sum_{i=1}^k n_i b_i^m,\\
\end{align*}
where on the second line we observed that each summand telescopes, and on the last line we have used that $\sum_i n_ib_i=0$.
\end{proof}

\subsection{A supercongruence for factorials}
We end with an alternate proof of a known supercongruence for factorials. Our proof uses the derivations $\delta_k$.
\begin{theorem}[Granville \cite{Gra97}, Proposition 5]
\label{thex}
Suppose $b_1,\ldots,b_k\in\Z_{\geq 0}$, $n_1,\ldots,n_k\in\Z$, and assume that
\[
\sum_{i=1}^k n_i b_i^m=0
\]
for $m=1,3,5,\ldots,2n-1$. Then for all primes $p$ sufficiently large,
\begin{equation}
\label{pf}
\prod_{i=1}^k (b_ip)!^{n_i}\equiv \prod_{i=1}^k b_i!^{n_i}\mod p^{2n+1}.
\end{equation}
\end{theorem}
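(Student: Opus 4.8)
The plan is to exhibit a motivic lift $\alpha^{\fa}$ of $\alpha:=\prod_i(b_ip)!^{n_i}$ whose difference from its degree-$0$ part $\alpha^{\fa}_0$ lies in $\fil^{2n+1}\MM((T))$. Since $\per$ maps $\fil^{2n+1}\MM((T))$ into $\fil^{2n+1}\Ai$, and since $\per(\alpha^{\fa})=(\alpha)$ while $\per(\alpha^{\fa}_0)=\bigl(\prod_i b_i!^{n_i}\bigr)$ is the constant sequence (because $per_p$ is a $\Q$-algebra homomorphism), this yields $\prod_i(b_ip)!^{n_i}\equiv\prod_i b_i!^{n_i}\bmod p^{2n+1}$ for all but finitely many $p$, which is \eqref{pf}. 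I take $\alpha^{\fa}:=\prod_i(c^{\fa}_1\cdots c^{\fa}_{b_i})^{n_i}$ as in the preceding subsection; the case $m=1$ of the hypothesis gives $\sum_i n_ib_i=0$, which is what makes $\alpha$ an element of the MHS algebra in the first place, and by \eqref{cc} each $c^{\fa}_m$ is a unit of $\MM[[T]]$ with constant term $m$, so $\alpha^{\fa}\in\MM[[T]]^{\times}$ with $\alpha^{\fa}_0=\prod_i b_i!^{n_i}\in\Q^{\times}$.

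The heart of the matter is to compute $\beta:=\log(\alpha^{\fa}/\alpha^{\fa}_0)\in T\MM[[T]]$ explicitly. First I would show $\beta$ lies in the $\Q((T))$-linear span (closure) of the depth-one motivic multiple zeta values $\{\zmm(m):m\geq 1\}$. Writing $c^{\fa}_m/m=\sum_j(m-1)^jT^j\Hm(1^j)$ and invoking the motivic Newton identities of Remark \ref{remnew} in generating-function form (with the symmetric-function variable specialized to $(m-1)T$) gives $\log(c^{\fa}_m/m)=\sum_{l\geq 1}\tfrac{(-1)^{l-1}(m-1)^l}{l}T^l\Hm(l)$; by the explicit formula \eqref{HtoZ} each $\Hm(l)$ is a $\Q$-linear combination of the $\zmm(m)$, so summing $\log\alpha^{\fa}=\log\alpha^{\fa}_0+\sum_i n_i\sum_{m=1}^{b_i}\log(c^{\fa}_m/m)$ puts $\beta$ in that span. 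Since $\{\zmm(m):m\geq 3\text{ odd}\}$ is part of a $\Q$-basis of $\MM$ and $\zmm(m)=0$ otherwise, I may then write $\beta=\sum_{m\geq 3\text{ odd}}g_m\zmm(m)$ with $g_m\in T\Q[[T]]$ uniquely determined. Applying the $\Q((T))$-linear derivation $\delta_k$ ($k\geq 3$ odd) and using $\delta_k\zmm(m)=[m=k]$ gives $\delta_k\beta=g_k$, while on the other hand $\delta_k\beta=(\delta_k\alpha^{\fa})/\alpha^{\fa}=-\tfrac{T^k}{k}\sum_i n_ib_i^k$ by Proposition \ref{propdf}. Hence
\[
\beta=-\sum_{m\geq 3\text{ odd}}\frac{1}{m}\Bigl(\sum_{i=1}^k n_ib_i^m\Bigr)\zmm(m)\,T^m .
\]

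The conclusion is then immediate: the hypothesis makes $\sum_i n_ib_i^m=0$ for $m=3,5,\ldots,2n-1$, so every term of $\beta$ with $m<2n+1$ vanishes, $\beta\in T^{2n+1}\MM[[T]]$, and therefore $\alpha^{\fa}-\alpha^{\fa}_0=\alpha^{\fa}_0\bigl(\exp(\beta)-1\bigr)\in\fil^{2n+1}\MM((T))$, as required. I expect the main obstacle to be the first half of the middle paragraph — establishing a priori that $\beta$ lies in the linear span of the depth-one $\zmm(m)$ — since it is exactly that fact which lets the cheap derivation computation pin down all the coefficients of $\beta$; the inputs are the generating-function packaging of the motivic Newton identities (Remark \ref{remnew}) together with \eqref{HtoZ}. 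An alternative route to this step, avoiding that manipulation: the element $\gamma:=\beta+\sum_m\tfrac{1}{m}\bigl(\sum_i n_ib_i^m\bigr)\zmm(m)T^m$ is killed by every $\delta_k$ by Proposition \ref{propdf}, and since $\bigcap_k\ker\delta_k=\Q$ on $\MM$ (from Brown's structure theorem \cite{Bro12}) while the degree-$0$ part of $\gamma$ vanishes — projection to degree $0$ is a ring map, so it commutes with $\log$, and $(\alpha^{\fa}/\alpha^{\fa}_0)_0=1$ — one gets $\gamma=0$ directly.
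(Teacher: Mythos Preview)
Your proof is correct and follows the same strategy as the paper: construct the motivic lift $\alpha^{\fa}$ via \eqref{pfcs}, use Proposition~\ref{propdf} to see that the derivations $\delta_m$ annihilate (or push into high $T$-filtration) $\alpha^{\fa}$, and conclude $\alpha^{\fa}\equiv\alpha^{\fa}_0\bmod\fil^{2n+1}$ before applying $\per$. The paper's argument is terser --- it simply notes $\delta_m\alpha^{\fa}=0$ for $m\le 2n-1$ and $\delta_m\alpha^{\fa}\in\fil^m$ for all $m$, then says ``it follows'' --- whereas you make that implication explicit by passing to the logarithm $\beta=\log(\alpha^{\fa}/\alpha^{\fa}_0)$ and pinning down its coefficients; your alternative route (that $\bigcap_k\ker\delta_k=\Q$ on $\MMo$, hence the part of $\alpha^{\fa}$ of positive weight lies in $\fil^{2n+1}$) is exactly the content of the paper's implicit step.
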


\begin{proof}
Let $\alpha^\fa$ be the motivic lift of the left hand side of \eqref{pf} coming from \eqref{pfcs}. By Proposition \ref{propdf}, we have $\delta_m\alpha^{\fa}=0$ for $m=3,5,\ldots,2n-1$. Proposition \ref{propdbin} implies $\delta_m\alpha^{\fa}\in\fil^{m}\MM((T))$ for all $m$, so we conclude $\delta_m\alpha^{\fa}\in\fil^{2n+1}\MM((T))$ for all $m$. It follows that
\[
\alpha^\fa\equiv \alpha_0=\prod_{i=1}^kb_i!^{n_i}\mod\fil^{2n+1}\MM((T)).
\]
We obtain the desired result by applying $\per$.
\end{proof}
\appendix

\section{Software}
We provide software for computing motivic lifts and for verifying \ppc{}s. The software is written in Python 2.7, and is available at
\begin{center}
\url{https://sites.google.com/site/julianrosen/mhs}.
\end{center}
The software can express multiple harmonic sums interms of $p$-adic multiple zeta values using a chosen basis.
\bigskip

\verb|>>> a = Hp(1,3,2)|

\verb|>>> a.disp()|

\bigskip
$H_{p-1}(1,3, 2)$
\bigskip

\verb|>>> a.mzv()|
\[
- \frac{9}{2}\zeta_p(3)^{2}+\frac{67}{16}p\zeta_p(7)+p^{2}\left(\zeta_p(5, 3)+\frac{23}{2}\zeta_p(3)\zeta_p(5)\right)+O(p^{3})
\]

The software will also compute expansions for other elements of the MHS algebra, for example the \ape{} numbers.

\verb|>>> a = aperybp()|

\verb|>>> a.disp()|

\bigskip
$\displaystyle \sum_{n=0}^{p-1}{p-1\choose n}^2{p+n-1\choose n}^2$
\bigskip

\verb|>>> a.mzv(err=8)|
\[
1+2p^{3}\zeta_p(3)-16p^{5}\zeta_p(5)+4p^{6}\zeta_p(3)^{2}-100p^{7}\zeta_p(7)+O(p^{8})
\]

\noindent The optional argument \verb|err=8| says we only want an expansion modulo $p^8$. We can also compute expansions for $p$-restricted multiple harmonic sums whose limit of summation is polynomial in $p$.

\verb|>>> a = H_poly_pr([1,-2,2], (1,2))|

\verb|>>> a.disp()|

\bigskip
$\displaystyle H_{2 p^{2} - 2 p + 1}^{(p)}(1, 2)$
\bigskip

\verb|>>> a.mzv(err=4)|
\[
6\zeta_p(3)-10p\zeta_p(3)+p^{2}\left(-4\zeta_p(3)+27\zeta_p(5)\right)+p^{3}\left(- \frac{293}{3}\zeta_p(5)+8\zeta_p(3)^{2}\right)+O(p^{4})
\]

We can also deal with various other nested sums. For example, suppose we are interested in the sum
\[
\sum_{n=1}^{p-3}{p\choose n}^2\frac{1}{(n+1)(n+2)}.
\]

\verb|>>> a = (BINN(0,0)**2*nn(-1)*nn(-2)).sum(1,-2).e_p()|

\verb|>>> a.mzv()|
\[
- \frac{11}{8}p^{2}+p^{3}\left(\frac{35}{16}-2\zeta_p(3)\right)- \frac{93}{32}p^{4}+p^{5}\left(\frac{215}{64}+\frac{5}{2}\zeta_p(3)-6\zeta_p(5)\right)+O(p^{6})
\]

\ack We thank Jeffrey Lagarias for many helpful suggestions.

\bibliographystyle{halpha}
\bibliography{jrbiblio}
\end{document}